\documentclass{amsart} 
\usepackage{graphicx}
\usepackage{hyperref}
\usepackage{amsmath, amsthm, amssymb, amscd}

\usepackage{mathrsfs}
\usepackage{mathtools}
\usepackage{enumerate}
\usepackage{hyperref}
\usepackage{verbatim}
\usepackage{centernot}
\usepackage{subcaption}
\usepackage{cite}
\usepackage{color}
\usepackage{esint}
\usepackage{tikz-cd}
\usetikzlibrary{shapes.geometric}
\usepackage[shortlabels]{enumitem}
 
\usepackage{bm}
\usepackage{bbm, dsfont}
\definecolor{mycolor}{rgb}{0.0, 0.75, 1.0}

\makeatletter
\newcommand{\labitem}[2]{%
\def\@itemlabel{\textbf{#1}}
\item
\def\@currentlabel{#1}\label{#2}}
\makeatother

\title[Reflected diffusion on inner uniform domains]{Sub-Gaussian heat kernel estimates for reflected diffusion on inner uniform domains}
\author{Riku Anttila}
\address[Riku Anttila]{Department of Mathematics and Statistics, University of Jyväskylä, P.O. Box 35, FI-40014 Jyväskylä, Finland}
\email{riku.t.anttila@jyu.fi}

\subjclass[2020]{31C25, 35K08, 31E05, 46E36}
\keywords{Dirichlet form, inner uniform domain, sub-Gaussian heat kernel estimates, reflected diffusion, cutoff energy condition} 

\date{\today}

\usepackage[shortlabels]{enumitem}
 
\newtheorem{theorem}[equation]{Theorem}
\newtheorem{lemma}[equation]{Lemma}

\newtheorem{proposition}[equation]{Proposition}
\newtheorem{corollary}[equation]{Corollary}

\newtheorem{question}[equation]{Question}
\newtheorem*{theorem*}{Theorem}
\newtheorem*{question*}{Question}
\newtheorem*{mainQ*}{Main question}

\numberwithin{equation}{section}

\theoremstyle{definition}
\newtheorem{definition}[equation]{Definition}

\theoremstyle{remark}
\newtheorem{remark}[equation]{Remark}
    \newcommand*{\N}{\mathbb{N}}
    
    
    \newcommand*{\R}{\mathbb{R}}
    

        \DeclarePairedDelimiter\Span{\langle}{\rangle}
        
        \DeclareMathOperator{\diam}{diam}
        
        \DeclareMathOperator{\dist}{dist}

        \DeclareMathOperator{\len}{len}

        \DeclareMathOperator{\loc}{loc}

        \DeclareMathOperator{\supp}{supp}

        \DeclarePairedDelimiter\abs{\lvert}{\rvert}
        \DeclarePairedDelimiter\norm{\lVert}{\rVert}
        
        
        
        \DeclarePairedDelimiter\floor{\lfloor}{\rfloor}

        \def\vint_#1{\mathchoice%
          {\mathop{\kern 0.2em\vrule width 0.6em height 0.69678ex depth -0.58065ex
                  \kern -0.8em \intop}\nolimits_{\kern -0.4em#1}}%
          {\mathop{\kern 0.1em\vrule width 0.5em height 0.69678ex depth -0.60387ex
                  \kern -0.6em \intop}\nolimits_{#1}}%
          {\mathop{\kern 0.1em\vrule width 0.5em height 0.69678ex
              depth -0.60387ex
                  \kern -0.6em \intop}\nolimits_{#1}}%
          {\mathop{\kern 0.1em\vrule width 0.5em height 0.69678ex depth -0.60387ex
                  \kern -0.6em \intop}\nolimits_{#1}}}
\def\vintslides_#1{\mathchoice%
          {\mathop{\kern 0.1em\vrule width 0.5em height 0.697ex depth -0.581ex
                  \kern -0.6em \intop}\nolimits_{\kern -0.4em#1}}%
          {\mathop{\kern 0.1em\vrule width 0.3em height 0.697ex depth -0.604ex
                  \kern -0.4em \intop}\nolimits_{#1}}%
          {\mathop{\kern 0.1em\vrule width 0.3em height 0.697ex depth -0.604ex
                  \kern -0.4em \intop}\nolimits_{#1}}%
          {\mathop{\kern 0.1em\vrule width 0.3em height 0.697ex depth -0.604ex
                  \kern -0.4em \intop}\nolimits_{#1}}}

\newcommand{\aveint}[2]{\mathchoice%
          {\mathop{\kern 0.2em\vrule width 0.6em height 0.69678ex depth -0.58065ex
                  \kern -0.8em \intop}\nolimits_{\kern -0.45em#1}^{#2}}%
          {\mathop{\kern 0.1em\vrule width 0.5em height 0.69678ex depth -0.60387ex
                  \kern -0.6em \intop}\nolimits_{#1}^{#2}}%
          {\mathop{\kern 0.1em\vrule width 0.5em height 0.69678ex depth -0.60387ex
                  \kern -0.6em \intop}\nolimits_{#1}^{#2}}%
          {\mathop{\kern 0.1em\vrule width 0.5em height 0.69678ex depth -0.60387ex
                  \kern -0.6em \intop}\nolimits_{#1}^{#2}}}
\usepackage{systeme}
\makeatletter
\let\c@equation\c@figure
\makeatother

\begin{document}

\begin{abstract}
We prove that sub-Gaussian heat kernel estimates are inherited from a diffusion process on the ambient space to the reflected diffusion process on a subset which is an inner uniform domain.
\end{abstract}

\maketitle
\section{Introduction}

The objective of this work is to positively answer a question posed by Murugan concerning heat kernel estimates of reflected diffusion \cite[Section 6.3]{murugan2024heat}. Following \cite{murugan2024heat}, our methods and results are formulated in terms of abstract Dirichlet forms. This approach relies on the well-known correspondence between regular Dirichlet forms and a certain class of Markov processes, established in the celebrated theorem of Fukushima \cite[Theorems 7.2.1-7.2.2]{FOT}.

The setting of the present work is as follows; see the discussion below and also Section \ref{sec:Preli} for the precise terminology and the relevant assumptions.
We let $(X,d,\mu)$ be a suitably regular metric measure space and consider a strongly local regular Dirichlet form $(\mathcal{E},\mathcal{F})$ on $L^2(X,\mu)$.
We also let $\{Y_t\}_{t > 0}$ be the associated diffusion process and assume it admits transition probability densities $\{p_t\}_{t > 0}$, meaning
\[
    \mathbb{P}\left(Y_t \in A \, \big |\, Y_0 = x \right) = \int_{A} p_t(x,y) \, d\mu(y)
\]
for all $x \in X \setminus E,\, t > 0$ and a Borel set $A \subseteq X$ where $E \subseteq X$ is a properly exceptional subset.

Conceptually, the reflected diffusion on an open subset $\Omega \subseteq X$, or more precisely on a certain completion of $\Omega$, can be understood as a stochastic process $\{\widetilde{Y}_t\}_{t > 0}$ that behaves like $\{Y_t\}_{t > 0}$ inside $\Omega$, but is pushed back into the domain upon hitting the boundary.
A classical example is the \emph{normally reflected Brownian motion} on a smooth domain $\Omega \subseteq \R^n$ studied in the SDE literature; see \cite{NRBM04} and references therein for further discussion.
See also \cite[Introduction and Section 2]{murugan2024heat}, \cite[Chapter 6]{chen2012symmetric} and references therein for a comprehensive overview on reflected diffusion.

The interest of the work is in understanding, when do nice properties of the diffusion $\{Y_t\}_{t > 0}$ on the ambient space $X$ inherit to the reflected diffusion $\{\widetilde{Y}_t\}_{t > 0}$ on $\Omega$.
In this direction, Murugan posed the following question.


\begin{question}[Section 6.3 \cite{murugan2024heat}]
\label{question}
    Assume that $\Omega \subseteq X$ is an inner uniform domain and that $\{p_t\}_{t > 0}$ satisfies the heat kernel estimates \hyperref[eq:HKMain]{$\textup{HKE}(\beta)$} for $\beta \geq 2$.
    Then, is it always true that the reflected diffusion $\{\widetilde{Y}_t\}_{t > 0}$ on $\Omega$ also admits transition probability densities $\{\widetilde{p}_t\}_{t > 0}$ satisfying heat kernel estimates \hyperref[eq:HKMain]{$\textup{HKE}(\beta)$}?
\end{question}

Question \ref{question} arises naturally since the analogous results are known for the Gaussian heat kernel estimates, namely when $\{p_t\}_{t > 0}$ satisfies \hyperref[eq:HKMain]{$\textup{HKE}(\beta)$} for $\beta = 2$. This was established by Gyrya and Saloff-Coste \cite[Theorem 3.10]{SaloffCoste2011NeumannAD}. In the strict sub-Gaussian case $\beta > 2$, Murugan positively answered Question \ref{question} when $\Omega$ satisfies a stronger assumption that $\Omega \subseteq X$ is a uniform domain \cite[Theorem 2.8]{murugan2024heat}.

The methods of Gyrya and Saloff-Coste do not immediately extend to the sub-Gaussian case. The main difficulty arises from the \emph{cutoff Sobolev inequality}, which is an energy inequality that plays a central role in the characterization of sub-Gaussian heat kernel estimates \cite{barlow2004stability,barlow2006stability,GrigorCapacity15}, but is irrelevant for the Gaussian case.
Establishing the cutoff Sobolev inequality directly for reflected diffusion on an inner uniform domain, or even on a uniform domain, appears to be a rather difficult task. In \cite{murugan2024heat}, Murugan circumvented this issue by showing that uniform domains are (Sobolev) extension domains in an analogous sense to a famous result of Jones \cite{Jones81}. This can be used to reduce many problems on the domain $\Omega$ to the ambient space $X$ where the necessary energy inequalities, including the cutoff Sobolev inequality, are already available.
However, such an approach does not work in general for inner uniform domains. For instance, it is well known in the literature of Sobolev spaces that a slit disk is not a Sobolev extension domain, but it is, nevertheless, an inner uniform domain.

The main result of this work is a positive answer to Question \ref{question}.
We formulate it in the language of Dirichlet forms, and see Theorem \ref{thm:MainMain} for a more general version.

\begin{theorem}[Theorem \ref{thm:MainMain}]\label{thm:Main}
    Let $(X,d,\mu)$ be a metric measure space where $(X,d)$ is complete and geodesic, and $\mu$ is doubling measure on $(X,d)$. Let $(\mathcal{E},\mathcal{F})$ be a strongly local regular Dirichlet form on $L^2(X,\mu)$ satisfying \hyperref[eq:HKMain]{$\textup{HKE}(\beta)$} for $\beta \geq 2$, $\Omega \subseteq X$ be an inner uniform domain and $(\widetilde{\Omega},\rho)$ be the completion of $(\Omega,\rho)$ where $\rho$ is the path metric of $\Omega$.
    Then the bilinear form $(\mathcal{E}^\Omega,\mathcal{F}(\Omega))$ in Definition \ref{def:LocDir} is a strongly local regular Dirichlet form on $L^2(\widetilde{\Omega},\mu)$ satisfying \hyperref[eq:HKMain]{$\textup{HKE}(\beta)$}.
\end{theorem}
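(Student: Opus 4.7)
The plan is to verify, for the reflected form $(\mathcal{E}^\Omega,\mathcal{F}(\Omega))$ on the metric measure space $(\widetilde{\Omega},\rho,\mu)$, the three stable ingredients that together characterize sub-Gaussian heat kernel estimates for a strongly local regular Dirichlet form: volume doubling ($\textup{VD}$), the $\beta$-Poincar\'e inequality ($\textup{PI}(\beta)$), and a cutoff-type energy inequality at scale $\beta$, in either the form of the cutoff Sobolev inequality of \cite{barlow2004stability,barlow2006stability} or the equivalent generalized capacity condition of \cite{GrigorCapacity15}. Once all three are in hand on $\widetilde{\Omega}$, the stability theorem delivers $\textup{HKE}(\beta)$. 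That $(\mathcal{E}^\Omega,\mathcal{F}(\Omega))$ is itself strongly local and regular on $L^2(\widetilde{\Omega},\mu)$ is essentially already contained in \cite{SaloffCoste2011NeumannAD} and is independent of $\beta$.

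\textbf{The easy ingredients.} Both $\textup{VD}$ on $(\widetilde{\Omega},\rho,\mu)$ and $\textup{PI}(\beta)$ for the reflected form should follow by adapting the inner uniform chaining arguments of Gyrya--Saloff-Coste. Inner uniform curves allow one to compare each $\rho$-ball with an ambient $d$-ball of comparable radius and measure, centered at an ``inner'' point far from $X\setminus\Omega$, so that the ambient $\textup{VD}$ and $\textup{PI}(\beta)$ transfer to $\widetilde{\Omega}$. The value of $\beta$ plays essentially no conceptual role here; only the scaling exponent in the Poincar\'e constant changes.

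\textbf{Main obstacle: the cutoff energy condition.} The heart of the proof is verifying the cutoff energy condition at scale $\beta$ on $\widetilde{\Omega}$. As stressed in the introduction, Murugan's extension-based reduction from the uniform case fails here because inner uniform domains need not be Sobolev extension domains. Instead, I would build the required cutoff functions directly on $\widetilde{\Omega}$ by Whitney patching. Given concentric $\rho$-balls $B_\rho(\xi,r) \subset B_\rho(\xi,R)$, cover the annular region by a Whitney decomposition of $\Omega$ into ambient balls $\{B_d(x_i,r_i)\}$ with $r_i\simeq \dist(x_i,X\setminus\Omega)$ and bounded overlap. On each Whitney ball, the ambient cutoff energy condition supplies a local cutoff with the expected $\beta$-scaled energy bound. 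Glue these local cutoffs using a Lipschitz partition of unity subordinate to the Whitney cover to obtain a global cutoff $\phi$ on $\widetilde{\Omega}$, equal to $1$ on $B_\rho(\xi,r)$ and supported in $B_\rho(\xi,R)$, and estimate $\int u^2\, d\Gamma(\phi)$ by decomposing the energy measure cell by cell.

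\textbf{Where the difficulty lies.} The delicate point is the control of the cross terms that appear when one expands the energy measure of the patched cutoff: these involve both $d\Gamma$ of the partition of unity and of the local ambient cutoffs, and their combined contribution must satisfy a $(R-r)^{-\beta}$ bound rather than the naive Lipschitz bound $(R-r)^{-2}$. The most promising route is to work with the integrated generalized capacity formulation of \cite{GrigorCapacity15}, which is weaker than the pointwise $\textup{CS}(\beta)$ and much better suited to gluing arguments; the inner uniform chain condition then guarantees that each $\rho$-ball of radius $\lesssim R-r$ is covered by a bounded number of Whitney balls of comparable scale, which is exactly what is needed to balance the $\beta$-scaling across cells. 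Combining the resulting capacity condition with $\textup{VD}$ and $\textup{PI}(\beta)$, the stability theorem yields $\textup{HKE}(\beta)$ for $(\mathcal{E}^\Omega,\mathcal{F}(\Omega))$, completing the proof.
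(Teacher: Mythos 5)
Your global skeleton (volume doubling + Poincar\'e + a cutoff-type energy condition, then invoke a characterization theorem) matches the paper's, and the doubling and Poincar\'e parts are indeed handled there essentially as you suggest (Lemmas \ref{lemma:IUD-doubling} and \ref{lemma:PIRefl}). The gap is in the step you yourself identify as the heart of the matter: the construction of the cutoff functions. Gluing local ambient cutoffs with a \emph{Lipschitz} partition of unity subordinate to a Whitney cover does not work when $\beta>2$: in the sub-Gaussian regime there is no bound of the form $d\Gamma\langle\phi\rangle\lesssim(\Lip\phi)^2\,d\mu$ (energy measures can be singular with respect to $\mu$), so the energy of the partition-of-unity factors is simply not controlled by their Lipschitz constants; and if one instead takes CE/CS-type cutoffs at each Whitney scale $r_i$, the cross terms and the sum $\sum_i\mu(B_i)/\Psi(r_i)$ run over infinitely many balls with $r_i\to0$ near the boundary, where $\Psi(r_i)^{-1}=r_i^{-\beta}$ blows up faster than in the Gaussian case. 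Your proposal names this difficulty but supplies no mechanism to resolve it; passing to the generalized capacity formulation of Grigor'yan--Hu--Lau does not remove it, since one must still exhibit a single admissible function with the correct total energy.

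The paper's resolution avoids Whitney patching entirely. It uses (i) the characterization of \ref{eq:HKMain} via the cutoff energy condition \ref{eq:CE} (Lemma \ref{lemma:charHK}), a condition on the energy measure of the cutoff \emph{alone}, with no $f^2$ weight, which therefore restricts trivially to subsets; (ii) V\"ais\"al\"a's theorem (Lemma \ref{lemma:vaisala}) that the connected component of $B(x,r)\cap\Omega$ containing $x$ is sandwiched between $D(x,r)\cap\Omega$ and $D(x,\tau r)\cap\Omega$; and (iii) the H\"older continuity of the ambient cutoffs (Remark \ref{rem:Holder}), which lets them extend to $\widetilde{\Omega}$. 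One then takes a \emph{single} ambient cutoff $\xi_0$ for $B(x,r)\subseteq B(x,2r)$, restricts it to the relevant connected component of $B(x,4r)\cap\Omega$ and sets it to zero elsewhere; item (ii) guarantees this is a genuine cutoff for $D(x,r)\subseteq D(x,2\tau r)$, and the energy bound is inherited verbatim since $D(y,r)\cap\Omega\subseteq B(y,r)$. Beyond this, a substantial portion of the paper's proof is devoted to the regularity of $(\mathcal{E}^\Omega,\mathcal{F}(\Omega))$ on $L^2(\widetilde{\Omega},\mu)$ and to identifying its intrinsic energy measures with $\Gamma^\Omega\langle\cdot\rangle$ (Proposition \ref{prop:SLRDF} and Corollary \ref{cor:EMcoincide}), both of which are prerequisites for applying the characterization theorem and neither of which is addressed in your proposal beyond a citation.
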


Theorem \ref{thm:Main} relates to Question \ref{question} by the fact that, assuming $(\mathcal{E}^\Omega,\mathcal{F}(\Omega))$ is a strongly local regular Dirichlet form on $L^2(\widetilde{\Omega},\mu)$, then by definition, $(\mathcal{E}^\Omega,\mathcal{F}(\Omega))$ is the Dirichlet form corresponding to the reflected diffusion process on $\widetilde{\Omega}$.
This definition is well-defined due to the aforementioned theorem of Fukushima.

As we already indicated, the main challenge in the proof of Theorem \ref{thm:Main} is the treatment of the cutoff Sobolev inequality.
To this end, the key ingredient of our approach is to use a characterization of the sub-Gaussian heat kernel estimates, recently obtained by the author, which does not involve the cutoff Sobolev inequality \cite{anttila2025approach}. Instead, it has been replaced with a new condition, the \emph{cutoff energy condition}, which we prove to be inherited to inner uniform domains using a result of Väisälä from the late 90s \cite{vaisala1998}.
The cutoff energy condition also plays a central role in the proof of the regularity of $(\mathcal{E}^\Omega,\mathcal{F}(\Omega))$.

\subsection*{Organization of the paper}
In Section \ref{sec:Preli}, we recall the necessary terminology for our proofs and results.

We prove some helpful lemmas in Section \ref{sec:aux}, which for the most part are already known in the literature.

In Section \ref{sec:MainThm}, we prove the main result of the work, Theorem \ref{thm:Main}/Theorem \ref{thm:MainMain}.

\section*{Acknowledgments}
This work started from a problem suggested by Mathav Murugan, and I am deeply grateful for his encouragement and for many valuable discussions.
I also thank Naotaka Kajino for helpful discussions and suggestions.
This work is supported by the Finnish Ministry of Education and Culture’s Pilot for Doctoral Programmes (Pilot project Mathematics of Sensing, Imaging and Modelling).

\section{Preliminary}\label{sec:Preli}

We begin by fixing the framework and the terminology.

\subsection{Metric spaces}
We recall some standard terminology of metric spaces and measures; see \cite{CourseMG,Heinonen} for further background.

Let $(X,d)$ be a complete metric space and denote its \emph{open balls}
\[
B(x,r) := \{y\in X: d(x,y) < r\}.
\]
The \emph{distance} between a point $x \in X$ and a non-empty subset $A \subseteq X$ is denoted
\[
    \dist(x,A) := \inf \{ d(x,a) : a \in A \},
\]
and the \emph{diameter} of a non-empty subset $A \subseteq X$ is
\[
    \diam(A) := \sup_{x,y \in A} d(x,y)
\]
The set of continuous functions $X \to \R$ is denoted $C(X)$, and its subset consisting of compactly supported continuous functions is $C_c(X)$. Given Borel subsets $E \subseteq F$ of $X$, we say that $f : X \to \R$ is \emph{cutoff function} for $E \subseteq F$ if $f \in C_c(X)$, $f|_E = 1$, $f|_{X \setminus F} = 0$ and $0 \leq f(x) \leq 1$ for all $x \in X$.

We say that a Borel measure $\mu$ on $(X,d)$ is \emph{doubling} if there is $D\geq 1$ such that
\begin{equation}\label{eq:mudoubling}
    0 < \mu(B(x,2r)) \leq D\mu(B(x,r)) < \infty
\end{equation}
for all $x \in X$ and $r > 0$.
If a doubling measure exists, then it follows from a direct volume argument that the metric space $(X,d)$ satisfies the \emph{metric doubling property}. This means that there exists a constant $N = N(D) \in \N$ such that for every $x\in X$ and $r>0$ there are $x_1,\ldots , x_N\in X$,
\begin{equation}\label{eq:MD}
    B(x,2r) \subseteq \bigcup_{i = 1}^N B(x_i,r).
\end{equation}
In particular, since we assumed $(X,d)$ to be complete, it is therefore also \emph{proper}, meaning every bounded closed set is compact. Furthermore, the metric doubling property clearly implies separability.

We also recall some terminology of rectifiable curves.
First, a \emph{curve} in a metric space $(X,d)$ is a continuous function $\gamma : [0,1] \to X$.
We often regard $\gamma$ as a subset of $X$ with the obvious interpretations. For instance, we write $z\in \gamma$ to indicate $z = \gamma(t)$ for some $t\in [0,1]$, or $\gamma \subseteq \Omega$ if $\gamma([0,1]) \subseteq \Omega$.
The \emph{length} of $\gamma$ is the value
\begin{equation*}
    \len(\gamma) := \sup  \left\{ \sum_{i = 0}^{k-1} d(\gamma(t_i),\gamma(t_{i+1})) : \text{ $\{ t_i \}_{i = 0}^k \subseteq [0,1]$ is an increasing sequence} \right\}.
\end{equation*}
We say that $\gamma$ is a \emph{rectifiable curve} it is has finite length.
Lastly, we say that a metric space $(X,d)$ is \emph{geodesic} if for every pair of points $x,y \in X$ there is a rectifiable curve $\gamma$ such that $x = \gamma(0),\, y = \gamma(1)$ and $d(x,y) = \len(\gamma)$.





\subsection{Inner uniform domains}
We recall the concepts related to inner uniform domains. See the work of Väisälä \cite{vaisala1998} for an excellent exposition on the topic. See also \cite{Lierl2014,SaloffCoste2011NeumannAD,Lierl2022,murugan2024heat,ReflDiffJumps2022,kajino2023heat,lierl2014dirichlet} for studies related to inner uniform domains in the Dirichlet form setting.

Let $(X,d)$ be a geodesic metric space and $\Omega \subseteq X$ be a non-empty open subset.
We define the \emph{path metric} of $\Omega$ by
\[
\rho(x,y) := \inf_{\gamma} \len(\gamma)
\]
where the infimum is taken over all rectifiable curves $\gamma$ in $X$ with $x = \gamma(0),\, y = \gamma(1)$ and $\gamma \subseteq \Omega$.
Here we understand $\inf \emptyset = \infty$.


\begin{definition}\label{def:IUD}
    Let $\Omega \subseteq X$ be a non-empty open subset and $\rho$ be its path metric. We say that $\Omega \subseteq X$ is an \emph{inner uniform domain} if
    there are constants $C_0,c_0 > 0$ such that
    for every pair $x,y \in \Omega$ there is a rectifiable curve $\gamma : [0,1] \to X$ with $\gamma \subseteq \Omega$, $x = \gamma(0),\, y = \gamma(1)$ and it satisfies the following two conditions.
    \begin{enumerate}
        \vspace{2pt}
        \item $\len(\gamma) \leq C_0 \cdot \rho(x,y)$.
        \vspace{2pt}
        \item For every $z \in \gamma$,
        \[
            \dist(z,X\setminus\Omega) \geq c_0 \frac{\rho(x,z) \cdot \rho(y,z)} {\rho(x,y)}.
        \]
    \end{enumerate}
    Here both $\len(\gamma)$ and $\dist(z,X \setminus \Omega)$ are computed using the ambient metric $d$.
\end{definition}

\begin{remark}
    We note that the length of a curve $\gamma \subseteq \Omega$ computed using the path metric $\rho$ coincides with the length computed with the ambient metric $d$; we refer to \cite[Proposition 2.3.12]{CourseMG} for details. Also note that, if $(\widetilde{\Omega},\rho)$ denotes the completion of $(\Omega,\rho)$, then it follows from the fact that $(X,d)$ is geodesic that
    \[
        \dist(z,X\setminus\Omega) =  \dist(z,\widetilde{\Omega}\setminus \Omega) :=  \inf \{ \rho(z,a) : a \in \widetilde{\Omega} \setminus \Omega \}
    \]
\end{remark}



\subsection{Dirichlet forms}
Next, we fix the terminology of Dirichlet forms and refer to the standard references \cite{FOT,chen2012symmetric} for further details.
We shall follow the convention that, given any two variable object $L : V \times V \to Z$, we denote $L(v) := L(v,v)$.

We note that some suitable assumptions on the ambient space $(X,d,\mu)$ are required for the following definition to be well-defined. In this work, we assume $(X,d)$ to be complete and geodesic, and that $\mu$ is a doubling Radon measure on $(X,d)$, which are sufficient to this end.

\begin{definition}\label{def:Dir}
Let $(X,d,\mu)$ be a metric measure space.
We say that $(\mathcal{E},\mathcal{F})$ is a \emph{Dirichlet form} on $L^2(X,\mu)$ if the following two conditions hold.
    \begin{enumerate}
        \vspace{3pt}
        \item $\mathcal{E}: \mathcal{F} \times \mathcal{F} \to \R$ is a symmetric non-negative definite bilinear form such that $\mathcal{F} \subseteq L^2(X,\mu)$ is a dense linear subspace, and $\mathcal{F}$ equipped with the inner product $\mathcal{E}_1(f,g) := \mathcal{E}(f,g) + \int_X f \cdot g \, d\mu$ is a Hilbert space.
        \vspace{3pt}
        \item For all $f \in \mathcal{F}$ we have $f^+ \land 1 \in \mathcal{F}$ and $\mathcal{E}(f^+ \land 1,f^+ \land 1) \leq \mathcal{E}(f,f)$. This condition is called the \emph{Markov property}.
    \end{enumerate}
    We consider two additional conditions.
    \begin{enumerate}
        \vspace{3pt}
        \item[(3)] We say that a Dirichlet form $(\mathcal{E},\mathcal{F})$ on $L^2(X,\mu)$ is \emph{regular} if the subspace $\mathcal{F} \cap C_c(X)$ is dense in both the inner product space $(\mathcal{F},\mathcal{E}_1)$ and the normed space $(C_c(X),\norm{\cdot}_{L^\infty})$.
        \vspace{3pt}
        \item[(4)] We say that a Dirichlet form $(\mathcal{E},\mathcal{F})$ on $L^2(X,\mu)$ is \emph{strongly local} if the following implication always holds. Whenever $f,g \in \mathcal{F}$ such that their supports $\supp_\mu[f], \supp_\mu[g] \subseteq X$ are compact and there is $a \in \R$ such that $\supp_{\mu}[f] \cap \supp_{\mu}[g - a \mathds{1}_X] = \emptyset$, we have $\mathcal{E}(f,g) = 0$.
        Here $\mathds{1}_X$ denotes the constant function $x \mapsto 1$, and $\supp_{\mu}[f]$ is the smallest closed set $F \subseteq X$ with $\int_{X\setminus F} f d\mu = 0$.
    \end{enumerate}
    Given a strongly local regular Dirichlet form $(\mathcal{E},\mathcal{F})$ on $L^2(X,\mu)$, every $f \in \mathcal{F}$ is assigned the associated \emph{energy measure} $\Gamma\Span{f,f} = \Gamma\Span{f}$ as follows; see \cite[Chapter 3]{FOT} for details.
    If $f \in \mathcal{F}\cap L^\infty(X,\mu)$, then $\Gamma\Span{f}$ is the unique non-negative Radon measure on $(X,d)$ satisfying
    \[
        \int_X \varphi \, d\Gamma\Span{f} = \mathcal{E}(f,f\varphi) - \frac{1}{2}\mathcal{E}(f^2,\varphi) \quad \text{for all } \varphi \in \mathcal{F} \cap C_c(X).
    \]
    For a general $f \in \mathcal{F}$, we now define $\Gamma\Span{f}(A) = \lim_{k \to \infty} \Gamma\Span{(f \lor -k) \land k}(A)$. We also define the two variable energy measures as the signed Radon measures
    \[
    \Gamma\Span{f,g} := 1/4(\Gamma\Span{f+g}-\Gamma\Span{f-g}).
    \]
\end{definition}

\begin{remark}
    Following the previous definitions, we note that $\Gamma\Span{f,g}(X) = \mathcal{E}(f,g)$ for all $f,g \in \mathcal{F}$ by \cite[Lemma 3.2.3]{FOT}. Moreover, $\mathcal{F} \cap L^\infty(X,\mu)$ is an algebra according to \cite[Theorem 1.4.2-(ii)]{FOT}, namely $f \cdot g \in \mathcal{F}$ for all $f,g \in \mathcal{F} \cap L^\infty(X,\mu)$.
\end{remark}

\begin{definition}\label{def:LocDir}
    Given a strongly local regular Dirichlet form $(\mathcal{E},\mathcal{F})$ on $L^2(X,\mu)$ and a non-empty open subset $\Omega \subseteq X$, we define the associated \emph{local Dirichlet space}
    \begin{equation*}\label{eq:LocalDSpace}
        \mathcal{F}_{\loc}(\Omega) := \left\{\, f :
        \begin{array}{lcr}
        \text{$f$ is a $\mu$-equivalence class of Borel functions $\Omega \to \R$} \\
        \text{such that $\mathds{1}_V f = \mathds{1}_V f^{\#}$ $\mu$-a.e. for some $f^\# \in \mathcal{F}$} \\
        \text{for each relatively compact subset $V \Subset \Omega$}
\end{array}
        \right\}.
    \end{equation*}
    Each pair $f,g \in \mathcal{F}_{\loc}(\Omega)$ is assigned the \emph{energy measure}, which is the unique signed Radon measure given by $\Gamma^\Omega \Span{f,g}(A) := \Gamma\Span{f^{\#},g^{\#}}(A)$ for all relatively compact subsets $A \Subset V$ where $f^{\#}, g^\# \in \mathcal{F}$ and $V \Subset \Omega$ are as in the previous display.
    Lastly, we define the bilinear form $(\mathcal{E}^\Omega,\mathcal{F}(\Omega))$ where $\mathcal{E}^\Omega : \mathcal{F}(\Omega) \times \mathcal{F}(\Omega) \to \R$ is given by
    \[
    \mathcal{E}^\Omega(f,g) := \int_\Omega \, d\Gamma^\Omega\Span{f,g},
    \]
    and $\mathcal{F}(\Omega)$ is the function space
    \[
    \mathcal{F}(\Omega) := \left\{f\in \mathcal{F}_{\loc}(\Omega) : \mathcal{E}_1^\Omega(f,f) <\infty\right\}
    \]
    where $\mathcal{E}_1^\Omega$ is the inner product
    \[
   \mathcal{E}_1^\Omega(f,g) := \int_\Omega f \cdot g \,d\mu + \mathcal{E}^\Omega(f,g).
    \]
\end{definition}

\begin{remark}
    The pair $(\mathcal{E}^\Omega,\mathcal{F}(\Omega))$ is not necessarily a strongly local regular Dirichlet form on $L^2(\Omega,\mu)$.
    Indeed, consider the following example. Let $(\mathcal{E},\mathcal{F})$ be the usual Dirichlet energy in $\R^n$, namely $\mathcal{F} := W^{1,2}(\R^n)$ and
    \[
        \mathcal{E}(f,g) := \int_{\R^n} \Span{\nabla f, \nabla g} \,dx.
    \]
    If $\Omega := B(0,1) \subseteq \R^n$ is the unit ball, then Dirichlet form $(\mathcal{E}^\Omega,\mathcal{F}(\Omega))$ on $L^2(\Omega,dx)$ is not regular.
    To see this, simply take the constant function $\mathds{1}_\Omega$, or more generally any function $f \in W^{1,2}(\R^n)$ such that $f|_{\Omega} \notin W^{1,2}_0(\Omega)$.
    Such functions cannot be approximated by $\{f_n\}_{n = 1}^\infty \subseteq C_c(\Omega)$ in the Sobolev norm.
    This issue can be sometimes resolved by taking the completion of $\Omega$. 
    Indeed, in the case discussed here, $(\mathcal{E}^\Omega,\mathcal{F}(\Omega))$ is a strongly local regular Dirichlet form on $L^2(\overline{\Omega},dx)$.
\end{remark}

\subsection{Heat kernel estimates}
We recall the terminology related to heat kernel estimates; see \cite{grigor2012two,grigor2014heat,saloff2002aspects} and therein references for further literature.

\begin{definition}  We say that $\Psi : (0,\infty) \to (0,\infty)$ is a \emph{scale function} if it is an increasing homeomorphism satisfying the following doubling type property for some $\beta_U,\beta_L > 1$. There is $C \geq 1$ such that for all $0 < r \leq R$,\begin{equation}\label{eq:PsiRad}C^{-1}\left(\frac{R}{r}\right)^{\beta_L} \leq \frac{\Psi(R)}{\Psi(r)} \leq C\left(\frac{R}{r}\right)^{\beta_U}.\end{equation}Given such $\Psi$, we associate it a function $\Phi$ given by\begin{equation}\label{eq:PhiRad}\Phi(s) := \sup_{r > 0} \left(\frac{s}{r} - \frac{1}{\Psi(r)}\right).\end{equation}\end{definition}

\begin{definition}\label{def:heatkernel}
    Let $(\mathcal{E},\mathcal{F})$ be a strongly local regular Dirichlet form on $L^2(X,\mu)$ and $\{P_t\}_{t > 0}$ be the associated Markov semigroup; see \cite[Section 1.4]{FOT}.
    A family of Borel measurable function $\{p_t\}_{t > 0}$, $p_t : X \times X \to [0,\infty]$, is a \emph{heat kernel} of $(\mathcal{E},\mathcal{F})$ if for every $t > 0$ the function $p_t$ is an integral kernel of $P_t$, meaning for all $t > 0$ and $f \in L^2(X,\mu)$,
    \[
    P_t(f)(x) = \int_X p_t(x,y)f(y)\, d\mu \quad \text{for $\mu$-almost every $x \in X$.}
    \]
    Given a scale function $\Psi$, we say that $(\mathcal{E},\mathcal{F})$ satisfies the \emph{heat kernel estimates} \ref{eq:HKMain} if there is a heat kernel $\{p_t\}_{t > 0}$ of $(\mathcal{E},\mathcal{F})$ and constants $C,C_1,C_2,c,\kappa > 0$ such that for all $t > 0$,
    \begin{align}\label{eq:HKMain}
    \tag{$\textup{HKE}(\Psi)$}
    p_t(x,y) & \leq \frac{C}{\mu(B(x,\Psi^{-1}(t))}\exp\left( -C_1t\Phi \left(C_2\frac{d(x,y)}{t}\right) \right) \text{ for $\mu$-a.e. $x,y \in X$}, \\
    p_t(x,y) & \geq \frac{c}{\mu(B(x,\Psi^{-1}(t)))} \text{ for $\mu$-a.e. $x,y \in X$ with $d(x,y) \leq \kappa \Psi^{-1}(t)$}. \nonumber
\end{align}
If the scale function is given by $\Psi(r) = r^\beta$ for some $\beta \geq 2$, we say that $(\mathcal{E},\mathcal{F})$ satisfies the \emph{heat kernel estimates} \hyperref[eq:HKMain]{$\textup{HKE}(\beta)$}.
\end{definition}

\begin{definition}\label{def:PI(Dir)}
    Given a scale function $\Psi$, we say that a strongly local regular Dirichlet form $(\mathcal{E},\mathcal{F})$ on $L^2(X,\mu)$ satisfies the \emph{Poincar\'e inequality} \ref{eq:PI} if there are constants $C,\sigma \geq 1$ satisfying the following condition.
    For all $x \in X$, $r > 0$ and $f \in \mathcal{F}$,
    \begin{equation}\label{eq:PI}
        \tag{$\textup{PI}(\Psi)$}
        \int_{B(x,r)} (f - f_{B(x,r)})^2 \, d\mu \leq C \Psi(r) \int_{B(x,\sigma r)} \, d\Gamma\Span{f}.
    \end{equation}
\end{definition}

\begin{definition}\label{def:CE}
     Given a scale function $\Psi$ and $\delta > 0$, we say that the strongly local regular Dirichlet form $(\mathcal{E},\mathcal{F})$ on $L^2(X,\mu)$ satisfies the \emph{cutoff energy condition} \ref{eq:CE} if there is a constant $C \geq 1$ satisfying the following. For all $x \in X$ and $R \in (0,\diam(X))$ there is a cutoff function $\xi \in \mathcal{F}$ for $B(x,r) \subseteq B(x,2r)$ such that
    \begin{equation}\label{eq:CE}
        \tag{$\textup{CE}_{\delta}(\Psi)$}
        \int_{B(y,r)} \,d\Gamma\Span{\xi} \leq C\left( \frac{r}{R} \right)^\delta \frac{\mu(B(y,r))}{\Psi(r)}
    \end{equation}
    for all $y \in X$ and $0 < r \leq 3R$.
\end{definition}

\begin{remark}\label{rem:Holder}
    If $(\mathcal{E},\mathcal{F})$ satisfies the Poincar\'e inequality \ref{eq:PI} and the cutoff energy condition \ref{eq:CE}, it then follows from \cite[Corollary 4.24]{anttila2025approach} that the cutoff functions provided by \ref{eq:CE} are necessarily Hölder continuous.
\end{remark}

We need the following two results from \cite{anttila2025approach} in our proofs.

\begin{lemma}[Theorem 4.9 \cite{anttila2025approach}]\label{lemma:charHK}
    Assume that $(X,d)$ is a complete geodesic metric space and that $\mu$ is a doubling measure on $(X,d)$. Then, for a given scale function $\Psi$, a strongly local regular Dirichlet form $(\mathcal{E},\mathcal{F})$ on $L^2(X,\mu)$ satisfies \ref{eq:HKMain} if and only if it satisfies both \ref{eq:PI} and \ref{eq:CE} for some $\delta > 0$.
\end{lemma}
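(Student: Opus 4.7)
The plan is to establish the equivalence in two directions, treating them separately, with the bulk of the work lying in the reverse implication.

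For the forward direction HKE $\Rightarrow$ PI + CE, the Poincar\'e inequality PI can be extracted from HKE by now-classical arguments going back to Saloff-Coste and Sturm in the local setting: the near-diagonal two-sided heat kernel bound yields a parabolic Harnack inequality, which in turn is well known to imply PI at the appropriate scale. For the cutoff energy condition CE, the natural cutoff to use is $\xi := 1 - t \cdot P_t \mathds{1}_{B(x,R)^c}$ at $t \sim \Psi(R)$, or equivalently the equilibrium potential of the condenser $(B(x,R), B(x,2R))$. The upper bound in HKE translates into $\Gamma\Span{\xi}(B(y,r)) \lesssim \mu(B(y,r))/\Psi(r)$ at the critical scale $r = R$, while the $(r/R)^\delta$ decay factor at smaller and larger scales is produced by iterating the bound dyadically and using the doubling of $\Psi$.

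For the reverse direction PI + CE $\Rightarrow$ HKE, my strategy is to reduce to the known Grigor'yan--Hu--Lau and Andres--Barlow characterization, in which HKE is equivalent to volume doubling together with PI and the cutoff Sobolev inequality CS. Volume doubling is part of the hypotheses and PI is given, so the substantive step is to deduce CS from PI + CE. Fix concentric balls $B(x,R) \subseteq B(x,2R)$ and a test function $f \in \mathcal{F}$. The plan is to take the CE cutoff $\xi$ and bound $\int f^2 \, d\Gamma\Span{\xi}$ by decomposing $f^2$ along a Whitney-type cover of the annulus $B(x,2R) \setminus B(x,R)$ and using PI to replace each local average of $f^2$ by a small multiple of $\Gamma\Span{f}$ plus an $L^2$-remainder. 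The geometric $(r/R)^\delta$ decay built into CE is precisely what is needed to make the resulting dyadic sum absolutely convergent.

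The main obstacle is exactly this upgrade from CE to CS. The former only controls the total mass $\Gamma\Span{\xi}(B(y,r))$, whereas CS demands a bound on the $f^2$-weighted energy measure of $\xi$ for arbitrary $f \in \mathcal{F}$; bridging this gap requires a genuinely quantitative interaction between PI, which controls oscillations of $f$, and the scale-decay in CE, which controls where the mass of $\Gamma\Span{\xi}$ can concentrate. A Moser/De Giorgi-style iteration, or a Whitney-based splitting of $f^2$ into dyadically localized pieces, appears unavoidable here. Once CS is obtained, the sub-Gaussian upper bound in HKE follows from Davies--Gaffney / Meyer's decomposition arguments, and the matching lower bound is produced by combining the parabolic Harnack inequality (which also comes from PI + CS) with chaining along geodesics, which is the one step where the geodesic hypothesis on $(X,d)$ enters essentially.
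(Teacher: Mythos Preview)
The paper does not contain a proof of this lemma; it is quoted as Theorem~4.9 of \cite{anttila2025approach} and used as a black box, so there is no in-paper argument to compare against directly. That said, the surrounding material gives hints that your reverse implication is on the right track: the proof of Corollary~\ref{cor:CS} explicitly appeals to \cite[Proposition~3.4 and the proof of Proposition~4.10]{anttila2025approach} to pass from PI\,+\,CE to a cutoff Sobolev inequality, which is exactly your plan, and from there the Grigor'yan--Hu--Lau / Andres--Barlow characterization finishes the job as you say.

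Your forward-direction sketch for CE is the soft spot. Taking $\xi$ to be an equilibrium potential (or $1 - P_t\mathds{1}_{B^c}$ at $t\sim\Psi(R)$) under HKE certainly yields the global capacity bound $\mathcal{E}(\xi)\lesssim\mu(B(x,R))/\Psi(R)$, but the \emph{local} estimate $\Gamma\langle\xi\rangle(B(y,r))\lesssim (r/R)^\delta\,\mu(B(y,r))/\Psi(r)$ with the power decay does not come from ``iterating the bound dyadically'' on its own. The exponent $\delta$ encodes a genuine regularity statement---essentially a H\"older estimate for $\xi$ (compare Remark~\ref{rem:Holder} in the present paper)---and producing it requires an elliptic/parabolic Harnack or oscillation-decay argument for $\xi$ combined with a Caccioppoli inequality to convert oscillation control into energy-measure control. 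As written, your outline does not name the mechanism that generates $\delta$, so that step is a gap rather than a routine iteration.
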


\begin{lemma}[Proposition 4.10 \cite{anttila2025approach}]
\label{lemma:CS}
    Assume that $(X,d)$ is a complete geodesic metric space and that $\mu$ is a doubling measure on $(X,d)$. Then, for a given scale function $\Psi$, if a strongly local regular Dirichlet form $(\mathcal{E},\mathcal{F})$ on $L^2(X,\mu)$ satisfies \ref{eq:HKMain} then it also satisfies the following cutoff Sobolev inequality \ref{eq:CS}. For all $x \in X$ and $R \in (0,\diam(X))$ there is a cutoff function $\xi \in \mathcal{F}$ for $B(x,r) \subseteq B(x,2r)$ such that for every $y \in X$, $0 < r \leq 3R$ and $f \in \mathcal{F}$,
    \begin{equation}\label{eq:CS}
    \tag{$\textup{CS}_{\delta}(\Psi)$}
        \int_{B(y,r)} \tilde{f}^2 \, d\Gamma\Span{\xi} \leq C\left( \frac{r}{R} \right)^\delta \left( \int_{B(y,2r)} \, d\Gamma\Span{f} + \frac{1}{\Psi(r)} \int_{B(y,2r)}f^2 \, d\mu \right).
    \end{equation}
    Here $\tilde{f}$ is any quasicontinuous $\mu$-representative of $f$; see \cite[Chapter 2]{FOT} for a detailed account on quasicontinuity.
\end{lemma}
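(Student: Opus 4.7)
The plan is to invoke the well-known characterization of sub-Gaussian heat kernel estimates in the strongly local setting. Under the standing assumptions that $(X,d)$ is complete and geodesic and $\mu$ is doubling, $\textup{HKE}(\Psi)$ is equivalent to the conjunction of $\textup{PI}(\Psi)$ and a cutoff Sobolev inequality of the form \ref{eq:CS} for some $\delta>0$; this is the strongly local form of the Grigoryan--Hu--Lau / Barlow--Grigoryan--Kumagai stability theorem, and extracting the $\textup{CS}_\delta(\Psi)$-half of this equivalence yields the lemma. In the framework of \cite{anttila2025approach}, I would factor the argument through Lemma \ref{lemma:charHK}, first extracting \ref{eq:PI} and \ref{eq:CE} from \ref{eq:HKMain} and then upgrading \ref{eq:CE} to \ref{eq:CS} using the same cutoff.

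The natural choice of cutoff $\xi \in \mathcal{F}$ for $B(x,R) \subseteq B(x,2R)$ is the one already supplied by \ref{eq:CE}, which by Remark \ref{rem:Holder} is H\"older continuous, so $\tilde{f}^{2}\,d\Gamma\Span{\xi}$ is unambiguously defined for the quasicontinuous representative $\tilde{f}$. After reducing to $f \in \mathcal{F}\cap C_{b}(X)$ by a density and capacity argument (using that $\Gamma\Span{\xi}$ does not charge sets of zero capacity), I would cover $B(y,r)$ by a family $\{B_{i}\}_{i=1}^{N}$ of smaller balls of bounded multiplicity, possible by metric doubling, and on each $B_{i}$ split
\[
\tilde{f}^{2} \leq 2(\tilde{f}-f_{B_{i}^{*}})^{2}+2\,|f_{B_{i}^{*}}|^{2},
\]
where $B_{i}^{*}$ is the Poincar\'e dilate. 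The averaged term is controlled directly by \ref{eq:CE} and Jensen's inequality, giving $|f_{B_{i}^{*}}|^{2}\,\Gamma\Span{\xi}(B_{i}) \leq C(r/R)^{\delta}\Psi(r)^{-1}\int_{B_{i}^{*}} f^{2}\,d\mu$. For the oscillation term, \ref{eq:PI} applied on $B_{i}^{*}$ furnishes an $L^{2}(d\mu)$-bound, which must then be transferred to an integral against $d\Gamma\Span{\xi}$ via a dyadic chaining across concentric subballs of $B_{i}$; the polynomial scaling \eqref{eq:PsiRad} of $\Psi$ keeps the resulting geometric series convergent and preserves the sharp $(r/R)^{\delta}$ factor inherited from \ref{eq:CE}.

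Summing both contributions over $i$ using the bounded multiplicity of the cover, and choosing the dilation parameter so that every $B_{i}^{*} \subseteq B(y,2r)$, delivers \ref{eq:CS} on $B(y,r)$. The main obstacle, and the technical heart of the argument, is exactly the transfer step for the oscillation: a naive $L^{\infty}$-bound on $\tilde{f}-f_{B_{i}^{*}}$ is not available from PI alone, so one must instead iterate a Caccioppoli-type estimate on a dyadic sequence of subballs in the spirit of Andres--Barlow, exploiting strong locality through the Leibniz rule for energy measures to trade $\tilde{f}^{2}\,d\Gamma\Span{\xi}$ for $\xi^{2}\,d\Gamma\Span{f}$ plus a remainder absorbable by Cauchy--Schwarz. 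Once this chaining is in place, the desired $(r/R)^{\delta}$ factor persists through all iterations, completing the proof.
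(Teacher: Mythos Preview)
The paper does not give its own proof of this lemma; it is quoted as Proposition 4.10 of \cite{anttila2025approach}. The only internal hint appears in the proof of Corollary \ref{cor:CS} (the $\widetilde{\Omega}$-analogue), which states that the inequality follows from the Poincar\'e inequality together with ``Proposition 3.4 and the proof of Proposition 4.10'' of that reference, using the cutoffs already supplied by the cutoff energy condition. Your overall plan---extract \ref{eq:PI} and \ref{eq:CE} from \ref{eq:HKMain} via Lemma \ref{lemma:charHK}, keep the same cutoff $\xi$, split $\tilde f^{2}$ into an averaged piece (handled by \ref{eq:CE} and Jensen) and an oscillation piece (handled by \ref{eq:PI} plus a dyadic telescope that exploits the scale-uniform bound in \ref{eq:CE} to pass from $d\mu$ to $d\Gamma\Span{\xi}$)---matches this outline exactly.

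One caution on your final paragraph: the Leibniz/Caccioppoli identity you propose trades $\int \tilde f^{2}\,d\Gamma\Span{\xi}$ for $\mathcal{E}(f\xi)$, $\int \xi^{2}\,d\Gamma\Span{f}$, and a cross term, but bounding $\mathcal{E}(f\xi)$ is exactly as hard as the original inequality (indeed Remark \ref{rem:CS} uses \ref{eq:CS} to bound $\mathcal{E}(f\xi)$, not the other way around), so this manoeuvre does not close on its own. The mechanism that actually works is the one you describe a few lines earlier: \ref{eq:CE} controls $\Gamma\Span{\xi}$ on \emph{every} subball by $\mu/\Psi$ with the factor $(r/R)^{\delta}$, and combining this Morrey-type control with the Lebesgue-point telescoping of $\tilde f$ (the role of the ``precise representatives'' in Proposition 3.4 of the reference) plus \ref{eq:PI} at each dyadic scale gives a convergent geometric series thanks to \eqref{eq:PsiRad}. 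Drop the Caccioppoli detour and run that chain directly.
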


Throughout the paper, we will abuse notation and take a quasi-continuous $\mu$-representative of a given $f \in \mathcal{F}$ without writing $\tilde{f}$.

\section{Auxiliary lemmas}\label{sec:aux}
For the remainder of the work, we consider a fixed metric measure space $(X,d,\mu)$ such that $(X,d)$ is complete and geodesic, and $\mu$ is a doubling Radon measure on $(X,d)$. We fix a strongly local regular Dirichlet form $(\mathcal{E},\mathcal{F})$ on $L^2(X,\mu)$, and an inner uniform domain $\Omega \subseteq X$.
For simplicity, we denote the path metric of $\Omega$ by $\rho$, the completion of $(\Omega,\rho)$ by $(\widetilde{\Omega},\rho)$, and its Radon measure $A \mapsto \mu(\Omega \cap A)$, $A \subseteq \widetilde{\Omega}$, also by $\mu$.
Similarly, we understand the measures $\Gamma^\Omega\Span{\cdot}$ from Definition \ref{def:LocDir} as measures on $(\widetilde{\Omega},\rho)$ where $A \mapsto \Gamma^\Omega\Span{f}(A \cap \Omega)$ for all Borel sets $A \subseteq \widetilde{\Omega}$.
The open balls in $\widetilde{\Omega}$ are denoted
\[
    D(x,r) := \{ y \in \widetilde{\Omega} : \rho(x,y) < r \} \text{ for all } y \in \widetilde{\Omega} \text{ and } r > 0,
\]
whereas the open balls in $(X,d)$ are noted $B(x,r)$ like in the previous section.
Lastly, we fix a scale function $\Psi$ and assume $(\mathcal{E},\mathcal{F})$ to satisfy \ref{eq:HKMain}.

\begin{remark}\label{rem:B=D}
    The topologies on $\Omega$ induced by the metrics $d$ and $\rho$ coincide. Indeed, since $(X,d)$ is geodesic and $\Omega$ is open,
    \begin{equation}\label{eq:B=D}
        B(x,r) = D(x,r) \cap \Omega \text{ for all $x \in \Omega$ and $r \in (0,\dist(x,X \setminus \Omega))$.}
    \end{equation}
    Note that the inclusion $D(x,r) \cap \Omega \subseteq B(x,r)$ holds in general because $d \leq \rho$.
\end{remark}

\begin{remark}\label{rem:L2(Omega)}
    Since, by definition, $\mu(\widetilde{\Omega} \setminus \Omega) = 0$, we may identify $L^2(\widetilde{\Omega},\mu) = L^2(\Omega,\mu)$. In particular, we naturally regard $\mathcal{F}(\Omega) \subseteq L^2(\widetilde{\Omega},\mu)$.
    While this might seem artificial in the first glance, we also note that, for our goals, it is fairly natural to have $\mu(\widetilde{\Omega} \setminus \Omega) = 0$. Indeed, since $\Omega$ is an inner uniform domain, it follows fairly easily that  $\widetilde{\Omega} \setminus \Omega \subseteq \widetilde{\Omega}$ is a porous subset. Thus, by the Lebesgue differentiation theorem, it holds for any doubling measure $\nu$ on $(\widetilde{\Omega},\rho)$ that $\nu(\widetilde{\Omega} \setminus \Omega) = 0$.
\end{remark}

We first review some properties of inner uniform domains. These are already known in the literature. We provide the details because some are needed in later proofs.

\begin{lemma}[Lemma 3.9 \cite{SaloffCoste2011NeumannAD}]\label{lemma:IUD-doubling}
    The measure $\mu$ is a doubling measure on both $(\Omega,\rho)$ and $(\widetilde{\Omega},\rho)$.
    In particular, $(\widetilde{\Omega},\rho)$ is a proper metric space.
\end{lemma}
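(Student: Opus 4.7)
The plan is to leverage the doubling of $\mu$ on the ambient space $(X,d)$ by constructing suitable \emph{corkscrew points} inside $\Omega$. First I would establish the following geometric sub-lemma: there exist constants $\alpha,\beta\in(0,1)$ depending only on the inner uniformity constants $C_0,c_0$ of Definition \ref{def:IUD} such that for every $x\in\Omega$ and every $r<\diam(\widetilde{\Omega})/2$ (the diameter measured by $\rho$) there is a point $x_r\in\Omega$ satisfying
\[
\rho(x,x_r)\leq \alpha r, \qquad \dist(x_r,X\setminus\Omega)\geq \beta r.
\]
To build $x_r$, pick any $y\in\Omega$ with $\rho(x,y)\approx \lambda r$ for a small parameter $\lambda$ (this is possible because $r$ is below half the $\rho$-diameter and $\Omega$ is dense in $\widetilde{\Omega}$), apply Definition \ref{def:IUD} to produce a uniform curve $\gamma$ from $x$ to $y$, and take $x_r$ to be the arc-length midpoint of $\gamma$. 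Condition (2) of Definition \ref{def:IUD} evaluated at $x_r$ gives $\dist(x_r,X\setminus\Omega)\gtrsim \lambda r$, while condition (1) yields $\rho(x,x_r)\leq C_0\lambda r/2$. A suitable choice of $\lambda$ arranges $\alpha<1/2$.

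With the corkscrew in hand, I would prove the doubling inequality $\mu(D(x,2r))\leq C\mu(D(x,r))$ for $x\in\Omega$. The key observation, which follows from \eqref{eq:B=D}, is that whenever $s\leq \dist(x_r,X\setminus\Omega)$ the $d$-ball $B(x_r,s)$ lies inside $\Omega$ and may therefore be identified with $D(x_r,s)$; both carry the same $\mu$-measure. Choosing $s:=\min(\beta r,r/2)$ and combining $d\leq \rho$ on $\Omega$ with the triangle inequality yields the two-sided sandwich
\[
B(x_r,s)\subseteq D(x,r) \quad\text{and}\quad D(x,2r)\cap\Omega \subseteq B(x_r,3r),
\]
after the identification. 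Since $\mu(\widetilde{\Omega}\setminus\Omega)=0$ by Remark \ref{rem:L2(Omega)}, the doubling of $\mu$ on $(X,d)$ applied to $B(x_r,3r)$ closes the argument, as $\mu(D(x,2r))\leq \mu(B(x_r,3r))\leq C'\mu(B(x_r,s))\leq C'\mu(D(x,r))$.

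The remaining cases are routine. For $x\in\widetilde{\Omega}\setminus\Omega$, I would use that $\Omega$ is dense in $\widetilde{\Omega}$ to approximate $x$ by $x_n\in\Omega$ and pass to the limit using the monotone inclusions $D(x_n,r-\rho(x,x_n))\subseteq D(x,r)\subseteq D(x_n,r+\rho(x,x_n))$. For scales $r$ beyond the corkscrew regime (i.e.\ $r$ comparable to the $\rho$-diameter), the doubling bound already obtained at the scale $r/2$, combined with a finite cover of $D(x,2r)$ by balls of radius comparable to $r$ in $(\widetilde{\Omega},\rho)$, yields the desired estimate. Finally, properness of $(\widetilde{\Omega},\rho)$ follows directly: measure doubling forces metric doubling by the standard volume argument recalled in \eqref{eq:MD}, and a complete metric doubling space is proper.

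The main technical step is the corkscrew construction: one must pick the auxiliary point $y$ and the scale parameter $\lambda$ so that both the bound on $\dist(x_r,X\setminus\Omega)$ and the bound on $\rho(x,x_r)$ hold with constants compatible with the triangle inequality appearing in the sandwich above. Once this geometric device is in place, everything else is a mechanical combination of inner uniformity with the ambient doubling and the identification \eqref{eq:B=D}.
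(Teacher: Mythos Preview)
Your approach is the same corkscrew argument the paper gives: locate a point $z$ at $\rho$-distance $\lesssim r$ from $x$ with $\dist(z,X\setminus\Omega)\gtrsim r$, invoke \eqref{eq:B=D} to identify $B(z,\cdot)=D(z,\cdot)$ at that scale, and sandwich $D(x,r)\supseteq B(z,s)$ and $D(x,2r)\subseteq B(z,Cr)$. The paper compresses your case analysis by assuming $D(x,2r)\neq D(x,r)$ (otherwise there is nothing to prove), taking $y\in(D(x,2r)\setminus D(x,r))\cap\Omega$ and $x_0\in D(x,r/4)\cap\Omega$; this handles all scales and all $x\in\widetilde{\Omega}$ at once, so neither the large-$r$ covering nor the density limit for boundary points is needed.

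One step, however, does not go through as written. Choosing $x_r$ to be the \emph{arc-length} midpoint of $\gamma$ does not yield $\dist(x_r,X\setminus\Omega)\gtrsim\lambda r$ from condition~(2), because that condition gives
\[
\dist(x_r,X\setminus\Omega)\geq c_0\,\frac{\rho(x,x_r)\,\rho(y,x_r)}{\rho(x,y)},
\]
and the arc-length midpoint only controls $\rho(x,x_r)$ from above, not below: when $C_0$ is large, $\gamma$ may loop repeatedly near $x$ before heading to $y$, placing the arc-length midpoint arbitrarily close to $x$ in the $\rho$-metric, so the product on the right can be $o(\lambda r)^2$. The fix is immediate---pick $x_r\in\gamma$ with $\rho(x,x_r)=\rho(x,y)/2$ (or any fixed fraction), exactly as the paper does by selecting $z\in\gamma$ with $\rho(x_0,z)=r/4$---after which both factors are bounded below by a multiple of $\lambda r$ and your computation proceeds unchanged.
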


\begin{proof}
    We only consider the case of $(\widetilde{\Omega},\rho)$.
    Let $x \in \widetilde{\Omega}$ and $r > 0$. We may assume that $D(x,2r) \neq D(x,r)$, meaning there is $y \in (D(x,2r) \setminus D(x,r)) \cap \Omega$. Also fix $x_0 \in D(x,r/4) \cap \Omega$.
    Now, take a curve $\gamma \subseteq \Omega$ connecting $x_0$ to $y$ and which is provided by Definition \ref{def:IUD}. Then take a point $z \in \gamma$ such that $\rho(x_0,z) = r/4$.
    By using the properties of $\gamma$,
    \begin{align*}
        \dist(z,X \setminus \Omega) & \geq c_0 \frac{\rho(x_0,z) \rho(y,z)}{\rho(x_0,y)}= c_0\frac{r}{4} \frac{\rho(y,z)}{\rho(x_0,y)}\\
        & \geq c_0 \frac{r}{4} \frac{ \rho(x_0,y) - \rho(x_0,z)}{\rho(x,x_0) + \rho(x,y)} \\
        & \geq \frac{c_0}{12}(\rho(x_0,y)- \rho(x_0,z))\\
        & \geq \frac{c_0}{12}\left( \frac{3}{4}r -\frac{1}{4}r \right) = \frac{c_0}{24}r.
    \end{align*}
    Thus, according to \eqref{eq:B=D}, $D(z, \kappa r) \cap \Omega = B(z, \kappa r)$ for all $\kappa \in (0,c_0/24)$.
    Furthermore, we have the chain of inclusions
    \begin{align*}
        D(x,2r) \cap \Omega & \subseteq D(x_0,4r) \cap \Omega && (\rho(x,x_0) \leq r/4) \\
        & \subseteq D(z,8r) \cap \Omega && (\rho(x_0,z) = r/4) \\
        & \subseteq B(z,8r). && (d \leq \rho)
    \end{align*}
    By choosing $\kappa = (c_0/24 \land 1/4)$,
    \begin{align*}
        \mu(D(x,2r)) & \leq \mu(B(z,8r))\\
        & \leq C(D,c_0) \mu(B(z,\kappa r)) && (\text{Doubling property } \eqref{eq:mudoubling})\\
        & = C(D,c_0) \mu(D(z,\kappa r)) && (D(z, \kappa r) \cap \Omega = B(z, \kappa r))\\
        & \leq C(D,c_0) \mu(D(x,r)). && (D(z,\kappa r) \subseteq D(x,r))\\
    \end{align*}
    This concludes the proof.
\end{proof}

\begin{remark}
    What we proved in the previous lemma is that $\mu(D(x,r)) \approx \mu(B(x,r))$.
\end{remark}

The following result and its proof is originally by Väisälä \cite{vaisala1998}. See also \cite[Lemma 3.8]{Lierl2014}.

\begin{lemma}[Theorem 3.4 \cite{vaisala1998}]\label{lemma:vaisala}
    Let $x \in \Omega$, $r > 0$ and $\mathcal{C} \subseteq \Omega$ be the connected component of the subset $B(x,r) \cap \Omega$ containing $x$. Then there is a constant $\tau > 1$ depending only on the doubling constant of $\mu$ in \eqref{eq:mudoubling} and the constants $c_0,\,C_0$ in Definition \ref{def:IUD} such that
    \[
        D(x,r) \cap \Omega \subseteq \mathcal{C} \subseteq D(x,\tau r) \cap \Omega.
    \]
\end{lemma}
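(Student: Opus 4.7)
The first inclusion $D(x,r) \cap \Omega \subseteq \mathcal{C}$ is essentially a restatement of the definition of $\rho$. Given $y \in D(x,r) \cap \Omega$ I would select a rectifiable curve $\beta \subseteq \Omega$ joining $x$ to $y$ with $\len(\beta) < r$; then for every $z \in \beta$ the estimate $d(x,z) \leq \rho(x,z) \leq \len(\beta|_{x,z}) < r$ confirms $\beta \subseteq B(x,r) \cap \Omega$, so $\beta$ witnesses that $y$ lies in the connected component $\mathcal{C}$ of $x$.

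For the reverse inclusion $\mathcal{C} \subseteq D(x,\tau r) \cap \Omega$, I would argue by contradiction. Suppose some $y \in \mathcal{C}$ satisfies $\rho(x,y) > \tau r$, with $\tau = \tau(D,c_0,C_0) > 1$ to be fixed at the end. Since $(X,d)$ is geodesic, each point of $B(x,r) \cap \Omega$ has a geodesically-connected $d$-ball neighborhood contained in $B(x,r) \cap \Omega$, so this intersection is locally path-connected and hence the component $\mathcal{C}$ is path-connected. Pick a continuous $\alpha : [0,1] \to \mathcal{C}$ with $\alpha(0) = x$ and $\alpha(1) = y$. By Remark \ref{rem:B=D} the metrics $d$ and $\rho$ induce the same topology on $\Omega$, so $s \mapsto \rho(x,\alpha(s))$ is a finite-valued continuous function of $s$ with $\rho(x,\alpha(0)) = 0$ and $\rho(x,\alpha(1)) > \tau r$. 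The intermediate value theorem then produces $w := \alpha(s^*)$ satisfying $\rho(x,w) = \tau r$ and $d(x,w) < r$.

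Applying Definition \ref{def:IUD} to the pair $(x,w)$ yields a rectifiable curve $\gamma \subseteq \Omega$ from $x$ to $w$ with $\len(\gamma) \leq C_0 \tau r$ and the cone bound $\dist(z, X \setminus \Omega) \geq c_0 \rho(x,z) \rho(w,z)/(\tau r)$ at every $z \in \gamma$. Along $\gamma$ the continuous, sign-changing function $z \mapsto \rho(x,z) - \rho(w,z)$ vanishes at some $z^*$, and the triangle inequality then forces $\rho(x,z^*), \rho(w,z^*) \geq \tau r / 2$. This gives $\dist(z^*, X \setminus \Omega) \geq c_0 \tau r / 4$, so the large $d$-ball $B(z^*, c_0 \tau r / 4)$ sits entirely in $\Omega$ and $d$ and $\rho$ coincide on it.

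The step I expect to be the main obstacle is converting this geometric picture into a quantitative contradiction with the constraint $\mathcal{C} \subseteq B(x,r)$. My plan is to iterate the midpoint construction dyadically on each of the two sub-arcs of $\gamma$ on either side of $z^*$, producing a nested family of large $d$-balls in $\Omega$ with well-controlled centers, and to combine this with the doubling of $\mu$ (Lemma \ref{lemma:IUD-doubling}) to compare $\mu$-masses at scales $\tau r$ and $r$. For $\tau$ sufficiently large in terms of $D, c_0, C_0$, the resulting estimates should force a contradiction either by producing $\mu$-mass inside $B(x,r) \cap \Omega$ exceeding $\mu(B(x,r))$, or by exhibiting two $\mu$-substantial disjoint regions of $\Omega$ within $B(x,r)$ that cannot coexist in the single connected component $\mathcal{C}$. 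Fixing such a $\tau$ at the outset yields the claimed universal constant.
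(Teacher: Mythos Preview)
Your treatment of the first inclusion is fine and matches the paper. The setup for the second inclusion --- locating $w\in\mathcal{C}$ with $d(x,w)<r$ but $\rho(x,w)=\tau r$, and then producing the inner-uniform midpoint $z^*$ with $\dist(z^*,X\setminus\Omega)\geq c_0\tau r/4$ --- is also correct. The problem is the step you yourself flag as the obstacle: turning this into a contradiction.

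The difficulty is that your point $z^*$ and the large ball $B(z^*,c_0\tau r/4)$ need not interact with $B(x,r)$ at all. The inner uniform curve $\gamma$ from $x$ to $w$ is only known to lie in $\Omega$; it may immediately leave $B(x,r)$ and wander through $\Omega$ at $d$-distance up to $C_0\tau r$ from $x$. So $B(z^*,c_0\tau r/4)$ may be disjoint from $B(x,r)$, and neither of the proposed contradictions makes sense: there is no mass of $B(z^*,c_0\tau r/4)$ sitting inside $B(x,r)$ to over-count, and the ball is not a subregion of $\mathcal{C}$. The dyadic iteration only produces more such balls along $\gamma$, all of which may lie far from $B(x,r)$; nothing forces them into the region where a volume contradiction would bite.

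The paper's argument is structurally different: it works with the path $\alpha\subseteq\mathcal{C}\subseteq B(x,r)$ (the one object that \emph{is} trapped in $B(x,r)$), not with the inner-uniform curve $\gamma$. It discretizes $\alpha$ into points $x_1,\dots,x_T$ at small $\rho$-gaps, pairs each $x_i$ with a far-away point of $\alpha$ to extract via inner uniformity a nearby ``deep'' point $y_i$ with $\dist(y_i,X\setminus\Omega)\gtrsim r$, and then uses metric doubling to bound the number of connected components of $\bigcup_i B(y_i,c_0 r/24)$ and the length of chains within each component. This yields a rectifiable curve in $\Omega$ from $x$ to $y$ of length $\lesssim r$, giving $\rho(x,y)\leq\tau r$ directly. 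The doubling is used to count components of a covering confined near $B(x,r)$, not to compare masses; this is the missing mechanism in your sketch.
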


\begin{proof}
    First, the inclusion $D(x,r) \cap \Omega \subseteq \mathcal{C}$ is obvious because $D(x,r) \cap \Omega$ is path connected and $d \leq \rho$.
    Thus, let $y \in \mathcal{C}$ and we need to show that $y \in D(x,\tau r) \cap \Omega$ for some quantitative constant $\tau > 1$.
    Note that, under the current topological assumptions, $\mathcal{C}$ is path connected, meaning we can choose a curve $\gamma$ connecting $x$ to $y$ and contained in $\mathcal{C}$. We, nevertheless, cannot at the moment justify $\gamma$ to satisfy the conditions in Definition \ref{def:IUD}, but we can say that its diameter in the ambient metric $d$ has the bound $M := \diam(\gamma) \leq 2r$.

    By the compactness and continuity of $\gamma$, there are $x_1,\ldots, x_T \in \gamma$ such that $d(x_{i},x_{i+1}) = \rho(x_{i},x_{i+1})$ for all $i = 1,\ldots, T-1$ and $x = x_1,\, x_T = y$. Moreover, since $\max\{ \rho(x,x_i),\rho(y,x_i) \} \geq M/2$ for all $i$, the following holds by the inner uniformity of $\Omega$ and the first display in the proof of Lemma \ref{lemma:IUD-doubling}. There are $y_1,\ldots, y_T \in \Omega$ and curves 
    $\gamma_1,\ldots, \gamma_T \subseteq \Omega$ connecting $x_i$ to $y_i$ such that $\len(\gamma_i) \leq C_0 M$ and $\dist(y_i,X \setminus \Omega) \geq M c_0/24$.
    Now, we take the union
    \[
        U:= \bigcup_{i = 1}^T B(y_i,Mc_0/24) = \bigcup_{i = 1}^T D(y_i,M c_0/24) \subseteq \Omega,
    \]
    and denote its distinct connected components $\mathcal{C}_1,\ldots, \mathcal{C}_P \subseteq U$.
    The next step is to find suitable uniform estimates for the geometry of $U$.
    
    For each $k = 1,\ldots,P$, we fix a point $y_{i_k} \in \{y_i\}_{i = 1}^T$ such that $B(y_{i_k},Mc_0/24) \subseteq \mathcal{C}_k$, which is possible because open balls in the present setting are connected.
    Moreover, the balls $B(y_{i_k},c_0M/4)$ $k = 1,\ldots,P$ are pairwise disjoint because the components are pairwise disjoint.
    Since,
    \[
    U \subseteq B(x,(1 + C_0 + c_0/24)M)
    \]
    it now follows from the metric doubling property that $P$ has an upper bound $P_0 = P_0(c_0,D)$ depending only on $c_0$ the doubling constant $D$ in \eqref{eq:mudoubling}.

    Then, we fix any $k = 1,\ldots, P$ and take two distinct open balls
    \[
    B(y_j,c_0 M/24),\,B(y_l,c_0 M/24) \subseteq \mathcal{C}_k
    \]
    for $j,l \in \{1,\ldots T\}$.
    We note that there is a sequence $z_1,\ldots, z_{L+1} \subseteq \{ y_i \}_{i = 1}^T$ such that $B(z_i,c_0M/24) \cap B(z_{i+1},c_0M/24) \neq \emptyset$. If such sequence would not exist, then $\mathcal{C}_k$ would be disconnected. By taking the length of this sequence, namely $L$, to be minimal, we see that the collection of open balls $\{ B(z_{2i},Mc_0/24) \}_{i = 1}^{\floor{L/2}}$ are pairwise disjoint. Thus, it follows from the metric doubling property and the argument above that $L$ has an upper bound $L_0 = L_0(c_0,D)$ depending only on $c_0$ and $D$ in \eqref{eq:mudoubling}.

    We have now gathered all the required ingredients to produce a rectifiable curve $\theta \subseteq \Omega$ connecting $x$ to $y$ with the length estimate $\len(\theta) \leq C(c_0,C_0,D) \cdot M$.
    First, after a reordering and removing some of the connected components of $U$, the following conditions hold.
    \begin{enumerate}
        \vspace{2pt}
        \item There are continuous curves $\theta_1,\, \theta_{P} \subseteq \Omega$ connecting $x$ to $\mathcal{C}_1$ and $y$ to $\mathcal{C}_P$, respectively, with $\len(\theta_1),\, \len(\theta_{P}) \leq C_0 M$.
        We simply choose $\theta_1 := \gamma_1$ and $\theta_{P} := \gamma_T$.
        \vspace{2pt}
        \item For each $k = 2,\ldots, P-1$ there is a continuous curve $\theta_k$ connecting $\mathcal{C}_k$ to $\mathcal{C}_{k+1}$ with $\len(\theta_k) \leq (2C_0 + 1)M$.
        Specifically, for a suitable choice of $i \in \{1,\ldots,T\}$, we choose $\theta_k$ by the concatenation of the three curves, $\gamma_{i}$ connecting $\mathcal{C}_k$ to $x_i$, a shortest curve $\gamma_{[x_i,x_{i+1}]}$ connecting $x_{i}$ to $x_{i+1}$ and $\gamma_{i+1}$ connecting $x_{i+1}$ to $\mathcal{C}_{k+1}$. Note that, to ensure that $\gamma_{[x_i,x_{i+1}]} \subseteq \Omega$, we use 
        $\rho(x_i,x_{i+1}) = d(x_i,x_{i+1}) \leq M$.
    \end{enumerate}
    Next, for each $k = 1,\ldots P-1$, we let $\tilde{\theta}_k \subseteq U \subseteq \Omega$ to be a curve that connects the endpoint of $\theta_k$ to the starting point of $\theta_{k+1}$ with the length of at most $L_0 c_0 M/12$. The existence of such curves follows the fact that
    $B(y_i,Mc_0/24) = D(y_i,Mc_0/24)$ for each $i$.
    By taking the concatenation of the curves $\{ \theta_k \}_{k = 1}^{P}$ and $\{ \tilde{\theta}_k \}_{k = 1}^{P-1}$,
    we obtain a curve $\theta$ connecting $x$ to $y$ with length
    \[
    \len(\theta) \leq P_0(2C_0 + 1 + L_0c_0/12)M.
    \]

    Finally, by choosing $\tau := 2P_0(2C_0 + 1 + L_0c_0/12)$, we have $y \in D(x,\tau r)$, and this completes the proof.
\end{proof}

We recall the following functional theoretic property of Dirichlet forms. For references, see for instance \cite[Corollary 1.1.9]{chen2012symmetric} and \cite[Proposition 3.18]{KajinoContraction}.
Note that the Clarkson's inequality used in \cite{KajinoContraction} follows from the parallelogram law.
We, nevertheless, provide the details since similar arguments are used later in the work.

\begin{lemma}\label{lemma:FunAnal}
    The Dirichlet form $(\mathcal{E},\mathcal{F})$ on $L^2(X,\mu)$ is lower-semicontinuous in the following sense.
    Given any sequence $\{f_n\}_{n = 1}^\infty \subseteq \mathcal{F}$ such that $\sup_{n \in \N} \mathcal{E}(f_n) < \infty$ and $f_n \to f$ in $L^2(X,\mu)$, then $f \in \mathcal{F}$ and
    \[
        \mathcal{E}(f) \leq \liminf_{n \to \infty} \mathcal{E}(f_n).
    \]
    If additionally $\lim_{n \to \infty} \mathcal{E}(f_n) = \mathcal{E}(f)$, then $f_n \to f$ in $(\mathcal{F},\mathcal{E}_1)$.
\end{lemma}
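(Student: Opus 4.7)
The plan is to argue that $(\mathcal{F},\mathcal{E}_1)$ being a Hilbert space provides all the needed compactness, and the conclusion is just the weak lower semicontinuity of a Hilbert space norm combined with the fact that the strong $L^2$-limit pins down the weak $\mathcal{F}$-limit.

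First I would observe that $\{f_n\}_{n=1}^\infty$ is bounded in $(\mathcal{F},\mathcal{E}_1)$, since the hypothesis gives $\sup_n\mathcal{E}(f_n)<\infty$ and $L^2$-convergence implies $\sup_n\norm{f_n}_{L^2}<\infty$. Passing to a subsequence $\{f_{n_k}\}$ along which $\mathcal{E}(f_{n_k})\to\liminf_n\mathcal{E}(f_n)$, I would then use reflexivity of the Hilbert space $(\mathcal{F},\mathcal{E}_1)$ to extract a further subsequence (not relabeled) converging weakly in $(\mathcal{F},\mathcal{E}_1)$ to some $g\in\mathcal{F}$. Since the continuous linear embedding $(\mathcal{F},\mathcal{E}_1)\hookrightarrow L^2(X,\mu)$ maps weakly convergent sequences to weakly convergent sequences, $f_{n_k}\rightharpoonup g$ weakly in $L^2(X,\mu)$; but by hypothesis $f_{n_k}\to f$ strongly in $L^2(X,\mu)$, hence $g=f$ and in particular $f\in\mathcal{F}$.

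Next I would invoke weak lower semicontinuity of the Hilbert norm $\mathcal{E}_1(\cdot)^{1/2}$ to conclude
\[
\mathcal{E}(f)+\norm{f}_{L^2}^2=\mathcal{E}_1(f)\leq\liminf_{k\to\infty}\mathcal{E}_1(f_{n_k})=\liminf_{k\to\infty}\bigl(\mathcal{E}(f_{n_k})+\norm{f_{n_k}}_{L^2}^2\bigr).
\]
The strong $L^2$-convergence yields $\norm{f_{n_k}}_{L^2}^2\to\norm{f}_{L^2}^2$, so that $\norm{f}_{L^2}^2$ cancels from both sides and, by the choice of the subsequence, one obtains $\mathcal{E}(f)\leq\liminf_n\mathcal{E}(f_n)$. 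A minor bookkeeping point here is that this argument, as written, yields only a $\liminf$ along the chosen subsequence; the standard remedy is to apply it to every subsequence of $\{f_n\}$: any subsequence admits a further subsequence along which $\mathcal{E}$ converges to any prescribed subsequential limit, and the above shows $\mathcal{E}(f)$ bounds that limit from below. This is probably the one subtlety to be careful with, though it is entirely routine.

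Finally, for the second statement, the hypothesis $\mathcal{E}(f_n)\to\mathcal{E}(f)$ combined with $L^2$-convergence gives $\mathcal{E}_1(f_n)\to\mathcal{E}_1(f)$. The subsequential weak $\mathcal{F}$-limit argument above, applied to every subsequence, shows $f_n\rightharpoonup f$ weakly in $(\mathcal{F},\mathcal{E}_1)$; weak convergence together with convergence of norms in a Hilbert space forces strong convergence via
\[
\mathcal{E}_1(f_n-f)=\mathcal{E}_1(f_n)-2\mathcal{E}_1(f_n,f)+\mathcal{E}_1(f)\to\mathcal{E}_1(f)-2\mathcal{E}_1(f,f)+\mathcal{E}_1(f)=0,
\]
which gives $f_n\to f$ in $(\mathcal{F},\mathcal{E}_1)$ and completes the proof.
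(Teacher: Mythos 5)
Your proof is correct and follows the same skeleton as the paper's: bound the sequence in $(\mathcal{F},\mathcal{E}_1)$, extract a weakly convergent subsequence by reflexivity, identify the weak limit with $f$, and conclude by weak lower semicontinuity of the Hilbert norm. Two steps are carried out with different (equally standard) tools. To identify the weak limit $g$ with $f$, the paper invokes Mazur's lemma to produce convex combinations converging strongly in $(\mathcal{F},\mathcal{E}_1)$ and hence in $L^2$, whereas you use that the continuous embedding $(\mathcal{F},\mathcal{E}_1)\hookrightarrow L^2(X,\mu)$ is weak-to-weak continuous, so the weak $L^2$-limit must coincide with the strong one; your route is arguably slightly cleaner. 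For the final strong-convergence claim, the paper uses the parallelogram law together with lower semicontinuity applied to $f+f_n$, while you first upgrade to weak convergence of the whole sequence via the subsequence principle and then apply the Radon--Riesz property (weak convergence plus convergence of norms implies strong convergence). Both arguments are complete; your handling of the liminf-along-a-subsequence bookkeeping and of the full-sequence weak convergence is careful and correct.
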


\begin{proof}
    Let $\{f_n\}_{n = 1}^\infty \subseteq \mathcal{F}$ and $f \in \mathcal{F}$ be as in the claim.
    We also fix a subsequence $\{ f_{n_k} \}_{k = 1}^\infty \subseteq \{f_n\}_{n = 1}^\infty$ such that
    \[
    \lim_{k \to \infty} \mathcal{E}(f_{n_k}
    ) = \liminf_{n \to \infty} \mathcal{E}(f_{n}).
    \]
    Recall that it follows from the Riesz representation theorem that every Hilbert space is reflexive. Since the sequence $\{ f_n \}_{n = 1}^\infty \subseteq \mathcal{F}$ is bounded, by taking another subsequence if necessary, we may assume the weak convergence $f_{n_k} \rightharpoonup g$ as $k \to \infty$ for some $g \in \mathcal{F}$.
    Then, it follows from Mazur's lemma; see \cite[Chapter V]{Yosida}, that some convex combinations of the form
    \[
        h_k := \sum_{l = k}^{N_k} \lambda_{k,l} f_{n_k} \in \mathcal{F}
    \]
    converge strongly to $g$ in $(\mathcal{F},\mathcal{E}_1)$. Since we necessarily have $h_k \to f$ and $h_k \to g$ in $L^2(X,\mu)$, this implies $f = g \in \mathcal{F}$.

    We verify the lower-semicontinuity property.
    By the Cauchy--Schwarz inequality,
    \[
    \left|\mathcal{E}_1\left(f_{n_k},f/\sqrt{\mathcal{E}_1(f)}\right)\right|^2 \leq \mathcal{E}_1(f_{n_k}),
    \]
    where we understand $f/\sqrt{\mathcal{E}_1(f)} = 0$ if $f = 0$.
    Since $f_{n_k} \rightharpoonup f$,
    \[
        \mathcal{E}_1(f) = \lim_{k \to \infty} \left|\mathcal{E}_1\left(f_{n_k},f/\sqrt{\mathcal{E}_1(f)}\right)\right|^2 \leq \lim_{k \to \infty} \mathcal{E}_1(f_{n_k}) = \liminf_{n \to \infty} \mathcal{E}_1(f_n).
    \]
    By $f_n \to f$ in $L^2(X,\mu)$, the desired lower-semicontinuity follows by subtracting $\int_X f^2 \, d\mu$ from the previous display.

    Lastly, we assume $\lim_{n \to \infty} \mathcal{E}(f_n) = \mathcal{E}(f)$. By the parallelogram law,
    \[
        \mathcal{E}(f-f_n) = 2\mathcal{E}(f) + 2 \mathcal{E}(f_n) - \mathcal{E}(f + f_n).
    \]
    By the lower-semicontinuity property and the triangle inequality,
    \begin{align*}
        2\sqrt{\mathcal{E}(f)} & \leq 
        \liminf_{n \to \infty} \sqrt{\mathcal{E}(f + f_n)} \leq 
        \limsup_{n \to \infty} \sqrt{\mathcal{E}(f + f_n)}\\
        & \leq \lim_{n \to \infty} \sqrt{\mathcal{E}(f)} + \sqrt{\mathcal{E}(f_n)} = 2\sqrt{\mathcal{E}(f)}.
    \end{align*}
    The combination of the previous two displays implies $ \mathcal{E}(f-f_n) \to 0$ as $n \to \infty$.
\end{proof}

\begin{remark}\label{rem:CS}
    If $\xi \in \mathcal{F}$ is a cutoff function for $B(x,r) \subseteq B(x,2r)$ provided by \ref{eq:CS}, then $f \cdot \xi \in \mathcal{F}$ for all $f \in \mathcal{F}$. To see this, first fix $n \in \N$ and $f_n := (f \land n) \lor -n.$ Then, $f_n \cdot \xi \in \mathcal{F}$ because $\mathcal{F} \cap L^\infty(X,\mu)$ is an algebra. By applying the Leibniz rule \cite[Lemma 3.25]{FOT}, strong locality \cite[Corollary 3.2.1]{FOT} the inequality of measures $\Gamma\Span{f_n} \leq \Gamma\Span{f}$ \cite[Equation (3.2.16)]{FOT}, and the cutoff Sobolev inequality \ref{eq:CS},
    \begin{align*}
        \mathcal{E}(f_n \cdot \xi) & \leq 2\left(\int_{X} f_n^2 \, d\Gamma\Span{\xi} + \int_{X} \xi^2 \, d\Gamma\Span{f_n}\right)\\
        & \leq 2\left(\int_{B(x,2r)} f^2 \, d\Gamma\Span{\xi} + \int_{B(x,2r)} \,d\Gamma\Span{f}\right) \\
        & \leq A\left(\int_{B(x,2r)} \, d\Gamma\Span{f} + \frac{1}{\Psi(r)} \int_{B(x,2r)} f^2\, d\mu\right).
    \end{align*}
    Thus, $\{ f_n \cdot \xi \}_{i = 1}^\infty \subseteq \mathcal{F}$ is a bounded sequence that converges in $L^2(X,\mu)$.
    It then follows from Lemma \ref{lemma:FunAnal} that $f \cdot \xi \in \mathcal{F}$ and
    \[
        \mathcal{E}(f \cdot \xi) \leq A\left(\int_{B(x,2r)} \, d\Gamma\Span{f} + \frac{1}{\Psi(r)} \int_{B(x,2r)} f^2\, d\mu\right).
    \]
\end{remark}

The proof of the following lemma is somewhat technical because we use the cutoff Sobolev inequality; see \cite[Lemma 4.4]{kajino2023conformal} and \cite[Proposition 2.50]{SaloffCoste2011NeumannAD} for similar arguments.
However, we note that there is a different  method, which is also more involved, for similar purpose that does not require such a technical condition. For further details, see \cite[Theorems 6.2.4-6.2.5]{chen2012symmetric}.

\begin{lemma}\label{lemma:ReflDiff}
$(\mathcal{E}^\Omega,\mathcal{F}(\Omega))$ is a strongly local Dirichlet form on $L^2(\widetilde{\Omega},\mu)$.
\end{lemma}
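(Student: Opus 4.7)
The plan is to verify the four defining properties of a strongly local Dirichlet form on $L^2(\widetilde\Omega,\mu)$ from Definition \ref{def:Dir}: bilinearity with density of $\mathcal{F}(\Omega)$ in $L^2(\widetilde\Omega,\mu)$, completeness of $(\mathcal{F}(\Omega),\mathcal{E}_1^\Omega)$, the Markov property, and strong locality. Bilinearity, symmetry and non-negativity of $\mathcal{E}^\Omega$ follow immediately from the corresponding properties of $\Gamma\Span{\cdot,\cdot}$ via local representatives. For density, any $h \in \mathcal{F} \cap C_c(X)$ satisfies $h|_\Omega \in \mathcal{F}(\Omega)$ (with $f^\# := h$ as a global representative), and by regularity of $(\mathcal{E},\mathcal{F})$ together with $\mu(\widetilde\Omega \setminus \Omega) = 0$ (Remark \ref{rem:L2(Omega)}) such restrictions are $L^2$-dense in $L^2(\widetilde\Omega,\mu) = L^2(\Omega,\mu)$. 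The Markov property and strong locality reduce to the corresponding properties of $(\mathcal{E},\mathcal{F})$ applied to local representatives: for instance, if $f^\# \in \mathcal{F}$ represents $f \in \mathcal{F}(\Omega)$ on $V \Subset \Omega$, then $(f^\#)^+ \wedge 1$ represents $f^+ \wedge 1$ on $V$ and the measure inequality $\Gamma\Span{(f^\#)^+ \wedge 1} \leq \Gamma\Span{f^\#}$ transfers to $\Gamma^\Omega\Span{f^+ \wedge 1} \leq \Gamma^\Omega\Span{f}$. The principal obstacle is completeness.

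Let $\{f_n\}_{n=1}^\infty \subseteq \mathcal{F}(\Omega)$ be Cauchy in $\mathcal{E}_1^\Omega$; since $\|\cdot\|_{L^2(\widetilde\Omega,\mu)} \leq \sqrt{\mathcal{E}_1^\Omega(\cdot)}$, it has an $L^2$-limit $f \in L^2(\widetilde\Omega,\mu)$. Fix $x \in \Omega$ and $r > 0$ with $B(x,2r) \Subset \Omega$, and let $\xi \in \mathcal{F}$ be the cutoff function for $B(x,r) \subseteq B(x,2r)$ provided by Lemma \ref{lemma:CS}. Choose local representatives $f_n^\# \in \mathcal{F}$ with $f_n = f_n^\#$ $\mu$-a.e.\ on $B(x,2r)$. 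Remark \ref{rem:CS} gives $\xi f_n^\# \in \mathcal{F}$, and applying the estimate there to $f_n^\# - f_m^\#$, combined with strong locality to identify $d\Gamma\Span{f_n^\# - f_m^\#}$ with $d\Gamma^\Omega\Span{f_n - f_m}$ on $B(x,2r)$, yields
\[
\mathcal{E}(\xi f_n^\# - \xi f_m^\#) \leq A\left(\int_{B(x,2r)} d\Gamma^\Omega\Span{f_n - f_m} + \frac{1}{\Psi(r)}\int_{B(x,2r)} (f_n - f_m)^2 \, d\mu\right),
\]
which tends to zero as $n,m \to \infty$ by the Cauchy hypothesis. Hence $\{\xi f_n^\#\}$ converges in $(\mathcal{F},\mathcal{E}_1)$ to some $g \in \mathcal{F}$; comparing $L^2$-limits forces $g = \xi f$ $\mu$-a.e.\ on $X$, so in particular $\mathds{1}_{B(x,r)} f = \mathds{1}_{B(x,r)} g$. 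Covering any $V \Subset \Omega$ by finitely many such balls shows $f \in \mathcal{F}_{\loc}(\Omega)$.

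To conclude $\mathcal{E}_1^\Omega(f - f_n) \to 0$, I note that the strong convergence $\xi f_m^\# \to \xi f$ in $(\mathcal{F},\mathcal{E}_1)$ implies total-variation convergence of the energy measures, via the identity $\Gamma\Span{h_m} - \Gamma\Span{h} = 2\Gamma\Span{h, h_m - h} + \Gamma\Span{h_m - h}$ together with Cauchy--Schwarz for $\Gamma$. Combined with strong locality, this gives $\int_V d\Gamma^\Omega\Span{f - f_n} = \lim_m \int_V d\Gamma^\Omega\Span{f_m - f_n} \leq \liminf_m \mathcal{E}^\Omega(f_m - f_n)$ for every $V \Subset \Omega$ that can be covered by finitely many good balls. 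Inner regularity of the Radon measure $\Gamma^\Omega\Span{f - f_n}$ on $(\widetilde\Omega,\rho)$, which is proper by Lemma \ref{lemma:IUD-doubling}, then yields $\mathcal{E}^\Omega(f - f_n) \leq \liminf_m \mathcal{E}^\Omega(f_m - f_n) \to 0$ as $n \to \infty$. Together with $\|f - f_n\|_{L^2} \to 0$ this shows $f \in \mathcal{F}(\Omega)$ and $f_n \to f$ in $(\mathcal{F}(\Omega), \mathcal{E}_1^\Omega)$. The essential technical ingredient throughout is the cutoff Sobolev inequality of Lemma \ref{lemma:CS}, which furnishes the uniform energy control of cutoff products needed to transfer the Cauchy property from $\mathcal{E}_1^\Omega$ to $\mathcal{E}_1$.
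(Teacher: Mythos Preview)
Your proof is correct and follows essentially the same strategy as the paper: both arguments hinge on the cutoff Sobolev inequality (Lemma~\ref{lemma:CS}/Remark~\ref{rem:CS}) to show that cutoff products $\xi f_n^\#$ form a Cauchy sequence in $(\mathcal{F},\mathcal{E}_1)$, thereby producing local representatives of the $L^2$-limit and establishing completeness; the remaining items (Markov property, density, strong locality) are handled identically via local representatives. The only cosmetic differences are that the paper bundles the finite cover into a single cutoff $\xi_V = (\sum_j \xi_j)\wedge 1$ rather than working ball-by-ball, and that your appeal to ``inner regularity of $\Gamma^\Omega\langle f-f_n\rangle$ on $(\widetilde\Omega,\rho)$'' is more cleanly phrased as an exhaustion $V_1 \Subset V_2 \Subset \cdots \Subset \Omega$ (as the paper does), since the Radon property on $\widetilde\Omega$ is not yet available before the total mass is shown to be finite.
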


\begin{remark}
    We do not yet consider the regularity of $(\mathcal{E}^\Omega,\mathcal{F}(\Omega))$.
\end{remark}

\begin{proof}[Proof of Lemma \ref{lemma:ReflDiff}]
    We first show that $(\mathcal{E}^\Omega,\mathcal{F}(\Omega))$ is a Dirichlet form on $L^2(\widetilde{\Omega},\mu)$.
    According to the discussion in Remark \ref{rem:L2(Omega)}, we may as well prove $(\mathcal{E}^\Omega,\mathcal{F}(\Omega))$ to be a Dirichlet form on $L^2(\Omega,\mu)$; note that the difference in the topologies of $(\Omega,\rho)$ and $(\widetilde{\Omega},\rho)$ is not relevant in the first two conditions of Definition \ref{def:Dir}.
    Throughout the proof, we consider a fixed sequence of relatively compact subsets $V_1 \Subset V_2 \Subset \cdots \Subset \Omega$ such that $\Omega = \bigcup_{i = 1}^\infty V_i$, which exists because $(X,d)$ is proper.
    
    We first verify the Markov property, Definition \ref{def:Dir}-(2). 
    Let $f \in \mathcal{F}(\Omega)$, $V \Subset \Omega$ be a relatively compact open subset and $f^{\#} \in \mathcal{F}$ such that $f\mathds{1}_{V} = f^{\#}\mathds{1}_{V}$ $\mu$-almost everywhere.
    Then $(f^+ \land 1)\mathds{1}_{V} = ((f^{\#})^+ \land 1)\mathds{1}_{V}$ $\mu$-almost everywhere and $(f^{\#})^+ \land 1 \in \mathcal{F}$ by the Markov property of $(\mathcal{E},\mathcal{F})$. Therefore, $f^+ \land 1 \in \mathcal{F}_{\loc}(\Omega)$.
    By using the fact that $\Gamma\Span{g^+\land 1} \leq \Gamma\Span{g}$ for all $g \in \mathcal{F}$; see \cite[Equation (3.2.16)]{FOT},
    \begin{align*}
        \mathcal{E}^\Omega(f^+\land 1) = \lim_{i \to \infty} \Gamma^\Omega\Span{f^+\land 1}(V_i) \leq \lim_{i \to \infty} \Gamma^\Omega\Span{f}(V_i) = \mathcal{E}^\Omega(f).
    \end{align*}
    This completes the proof of the Markov property.
    
    Next, we verify Definition \ref{def:Dir}-(1). Note that, for all Borel sets $A \subseteq X$, the mapping $(f,g) \mapsto \Gamma\Span{f,g}(A)$, $\mathcal{F} \times \mathcal{F} \to \R$, is symmetric, non-negative definite and bilinear. This can be seen from \cite[Equation (3.2.16)]{FOT}.
    By using a similar argument as for the Markov property, it now follows that $(f,g) \mapsto \mathcal{E}^\Omega(f,g)$, $\mathcal{F}(\Omega) \times \mathcal{F}(\Omega) \to \R$, also satisfies these properties. Moreover, the density $\mathcal{F}(\Omega) \subseteq L^2(\Omega,\mu)$ easily follows from the density $\mathcal{F} \subseteq L^2(X,\mu)$ by restricting functions in $\mathcal{F}$ to $\Omega$.
    Thus, we need to check the completeness of $(\mathcal{F}(\Omega),\mathcal{E}_1^\Omega)$.
    
    To this end, fix a Cauchy sequence $\{f_i\}_{i = 1}^\infty \subseteq \mathcal{F}(\Omega)$ and let $f \in  L^2(\Omega,\mu)$ be its limit in the $L^2$-norm. We need to prove that, for every relatively compact open subset $V\Subset \Omega$, there is $f^{\#} \in \mathcal{F}$ such that $f\mathds{1}_V = f^\# \mathds{1}_V$.
    Thus, fix such $V \Subset\Omega$. We also take a finite covering $V \subseteq \bigcup_{j \in J} B(x_j,r_j)$ such that $B(x_j,5r_j) \subseteq \Omega$, and a second relatively compact open subset $U_V := \bigcup_{i \in I}B(x_j,4r_j) \Subset \Omega$. For each $j \in J$ let $\xi_j \in \mathcal{F}$ be a cutoff function for $B(x_j,r_j) \subseteq B(x_j,2r_j)$ be provided by the cutoff Sobolev inequality \ref{eq:CS}, which holds by Lemma \ref{lemma:CS}.
    Then, set $\xi_V := (\sum_{j \in J} \xi_j) \land 1 \in \mathcal{F}$.
    
    Next, fix functions $f_i^{\#} \in \mathcal{F}$ such that $f_i\mathds{1}_{U_V} = f_i^{\#}\mathds{1}_{U_V}$ $\mu$-almost everywhere, and consider $f_i^{\#} \cdot \xi_V$. According to Remark \ref{rem:CS}, $f_i^{\#} \cdot \xi \in \mathcal{F}$ and
    \begin{equation*}
        \mathcal{E}(f_i^{\#} \cdot \xi_V - f_k^{\#} \cdot \xi_V) \lesssim \sum_{j \in J} \int_{B(x_j,2r_j)} \, d\Gamma\Span{f_i - f_k} + \frac{1}{\Psi(r_j)} \int_{B(x_j,2r_j)} (f_i - f_k)^2\, d\mu.
    \end{equation*}
    Moreover, clearly $f_i^{\#} \cdot \xi_V \to f \cdot \xi_V$ in $L^2(X,\mu)$. Therefore, it follows from the previous display that $\{ f_i^{\#} \cdot \xi_V \}_{i = 1}^\infty \subseteq \mathcal{F}$ is a Cauchy sequence, meaning $f \cdot \xi \in \mathcal{F}$ since $(\mathcal{F},\mathcal{E}_1)$ is a Hilbert space.
    Also $(f \cdot \xi_V)\mathds{1}_V = f\mathds{1}_V$. Because $V \Subset \Omega$ is arbitrary, $f \in \mathcal{F}_{\loc}(\Omega)$. 

    Next, we show that $\mathcal{E}^\Omega(f) < \infty$.
    By the previous part of the proof, for each $l \in \N$,
    $\Gamma\Span{f_i - f}(V_l) \to 0$ as $i \to \infty$.
    Using this,
    \begin{equation*}\label{eq:Cauchy}
        \Gamma^\Omega\Span{f}(V_l) \leq \limsup_{i \to \infty} \left\{\Gamma^\Omega\Span{f-f_i}(V_l) + \Gamma^\Omega\Span{f_i}(V_l)\right\} \leq \limsup_{i \to \infty} \mathcal{E}^\Omega(f_i).
    \end{equation*}
    By letting $l \to \infty$, we get $\mathcal{E}^\Omega(f) \leq \limsup_{i \to \infty} \mathcal{E}^\Omega(f_i)$, and the upper bound is finite because $\{f_i\}_{i = 1}^\infty \subseteq \mathcal{F}(\Omega)$ is a Cauchy sequence.
    Hence, $\mathcal{F}(\Omega)$ equipped with the inner product $\mathcal{E}^\Omega_1$ is a Hilbert space.

    Lastly, we show that $(\mathcal{E}^\Omega,\mathcal{F}(\Omega))$ is strongly local on $L^2(\widetilde{\Omega},\mu)$. Let $f,g \in \mathcal{F}(\Omega)$ such that the assumptions in Definition \ref{def:Dir}-(4) hold for the constant $a \in \R$.
    It is sufficient to show that $\Gamma^\Omega\Span{f,g}(V_l) = 0$ for all $l \in \N$. Fix $l \in \N$ and let $\xi_{V_l}$ and $U_{V_l}$ be as in the earlier part of the proof for $V = V_l$.
    Also let $f^{\#},g^{\#} \in \mathcal{F}$ such that $f\mathds{1}_{U_{V_l}} = f^{\#}\mathds{1}_{U_{V_l}}$ and $g\mathds{1}_{U_{V_l}} = g^{\#}\mathds{1}_{U_{V_l}}$.
    Then $f^{\#} \cdot \xi_{V_l},\, g^{\#} \cdot \xi_{V_l} \in \mathcal{F}$ have compact supports and there is an open set $V_l \subseteq U \Subset U_{V_l}$ such that the following holds.
    \begin{enumerate}
        \vspace{2pt}
        \item $\supp_{\mu}[f^{\#} \cdot \xi_V] \cap V_l \subseteq U$.
        \vspace{2pt}
        \item $g^{\#} \cdot \xi_{V}$ is constant on $U$.
    \end{enumerate}
    By these properties, it follows from the strong locality of $(\mathcal{E},\mathcal{F})$ along with the formula determining the energy measures, displayed in Definition \ref{def:Dir}, and the density $\mathcal{F} \cap C_c(X) \subseteq C_c(X)$ in the uniform norm,
    \[
    \Gamma^\Omega\Span{f,g}(V_l) = \Gamma\Span{f^{\#} \cdot \xi_{V_l}, g^{\#} \cdot \xi_{V_l}}(V_l) = 0.
    \]
    This completes the proof.
\end{proof}

\begin{lemma}\label{lemma:Leibniz}
    The following equalities of signed measures hold.
    \begin{enumerate}
        \vspace{2pt}
        \item If $f_1,f_2,f_3 \in \mathcal{F}_{\loc}(\Omega) \cap C(\widetilde{\Omega})$,
        \[
            d\Gamma^\Omega\Span{f_1 \cdot f_2,f_3} = f_1 \cdot d \Gamma^\Omega\Span{f_2,f_3} + f_2 \cdot d \Gamma^\Omega\Span{f_1,f_3}.
        \]
        \item If $f_1,f_2 \in \mathcal{F}_{\loc}(\Omega) \cap C(\widetilde{\Omega})$ and $\psi : \R \to \R$ is a smooth function with $\psi(0) = 0$,
        \[
            d\Gamma^\Omega\Span{ \psi \circ f_1,f_2} = (\psi' \circ f_1)\cdot d\Gamma^\Omega\Span{f_1,f_2}.
        \]
    \end{enumerate}
\end{lemma}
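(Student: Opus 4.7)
The strategy is to reduce both identities to their counterparts for the ambient Dirichlet form $(\mathcal{E},\mathcal{F})$ (the Leibniz rule and the chain rule of \cite[Section 3.2]{FOT}) via the localized construction of $\Gamma^\Omega$ in Definition \ref{def:LocDir}. The key observation is that $\Gamma^\Omega\Span{f,g}(A)=\Gamma\Span{f^\#,g^\#}(A)$ for any Borel $A\Subset V\Subset\Omega$ and any ambient representatives $f^\#,g^\#\in\mathcal{F}$ agreeing with $f,g$ on $V$, so it suffices to produce suitable \emph{bounded} $\#$-representatives of $f_1 f_2$ and $\psi\circ f_1$ on each relatively compact $V\Subset\Omega$ and then invoke the ambient differential calculus.

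Fix $V\Subset\Omega$. Since $(\widetilde{\Omega},\rho)$ is proper by Lemma \ref{lemma:IUD-doubling} and the $f_i$ are continuous on the compact set $\overline V\subseteq\widetilde\Omega$, they are uniformly bounded on $V$ by some $M$. Starting from arbitrary ambient representatives, their truncations $f_i^\#:=(f_i^\#\vee -M)\wedge M\in\mathcal{F}$ are bounded and still coincide with $f_i$ on $V$. For part (1), the product $f_1^\#\cdot f_2^\#\in\mathcal{F}\cap L^\infty$ is a $\#$-representative of $f_1 f_2$ on $V$, confirming $f_1 f_2\in\mathcal{F}_{\loc}(\Omega)$, and the ambient Leibniz rule gives
\[
    d\Gamma\Span{f_1^\# f_2^\#,\, f_3^\#}=f_1^\#\,d\Gamma\Span{f_2^\#,f_3^\#}+f_2^\#\,d\Gamma\Span{f_1^\#,f_3^\#}.
\]
Restricting to $A\Subset V$ and using $f_i^\#=f_i$ (after passing to quasi-continuous versions, see below) yields the claimed identity for $\Gamma^\Omega$ on $A$. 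Part (2) is completely analogous: the chain rule in $(\mathcal{E},\mathcal{F})$ applied to the bounded function $f_1^\#$ and the $C^1$ function $\psi$ with $\psi(0)=0$ provides $\psi\circ f_1^\#\in\mathcal{F}$ and $d\Gamma\Span{\psi\circ f_1^\#, f_2^\#}=(\psi'\circ f_1^\#)\,d\Gamma\Span{f_1^\#, f_2^\#}$, whose restriction to $A$ gives the claim. Exhausting $\Omega$ by a countable family of such $V_k$'s upgrades both pointwise identities to equalities of signed Radon measures on $\Omega$, and since the measures $\Gamma^\Omega\Span{\cdot,\cdot}$ are by definition concentrated on $\Omega$ and $\mu(\widetilde\Omega\setminus\Omega)=0$, the identities hold on $\widetilde\Omega$.

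The main technical subtlety—and the step I expect to be the only real obstacle beyond bookkeeping—is that $\#$-representatives agree with $f_i$ only $\mu$-almost everywhere on $V$, whereas the multiplier functions $f_1$, $f_2$, and $\psi'\circ f_1$ appear against energy measures that may be singular with respect to $\mu$. This is resolved by working with quasi-continuous representatives: the restriction of $f_i\in C(\widetilde\Omega)$ to $V$ is continuous hence quasi-continuous, and any two quasi-continuous versions of the same $\mathcal{F}$-class coincide quasi-everywhere, which suffices since energy measures do not charge sets of zero capacity. Once this identification is in place, the proof is a direct transfer of the ambient calculus rules through Definition \ref{def:LocDir}.
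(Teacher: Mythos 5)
Your proof is correct and follows essentially the same route as the paper: localize to $A \Subset V \Subset \Omega$, transfer the ambient Leibniz and chain rules \cite[Lemma 3.2.5, Theorem 3.2.2]{FOT} through the definition of $\Gamma^\Omega$ in Definition \ref{def:LocDir}, and patch over an exhaustion of $\Omega$. The only point of divergence is the treatment of representatives: the paper multiplies by the continuous cutoff $\xi_V$ to produce representatives in $\mathcal{F}\cap C_c(X)$ agreeing with the $f_i$ \emph{pointwise} on $V$, thereby sidestepping the quasi-continuity issue altogether, whereas you use truncated (bounded but not necessarily continuous) representatives and then correctly identify the multipliers quasi-everywhere on $V$ using that two quasi-continuous functions agreeing $\mu$-a.e.\ on an open set agree q.e.\ there and that energy measures do not charge sets of zero capacity; both resolutions are valid.
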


\begin{remark}\label{rem:QS}
    We have restricted the statements in (1) and (2) in Lemma \ref{lemma:Leibniz} to continuous functions to avoid the somewhat technical detail related to quasicontinuity in later arguments.
\end{remark}

\begin{proof}[Proof of Lemma \ref{lemma:Leibniz}]
    The prove the condition (1), it is sufficient to show that, for any relatively compact open subset $V \Subset \Omega$ and every relatively compact subset $A \Subset V$ the signed Radon measures in the claim agree on $A$.
    The desired equality then follows from the Caratheodory extension theorem.
    
    To this end, take such $A$ and $V$, and fix $f_1,f_2,f_3 \in \mathcal{F}_{\loc}(\Omega) \cap C(\widetilde{\Omega})$.
    Take any $f_1^{\#},f_2^{\#},f_3^{\#} \in \mathcal{F} \cap C_c(X)$ such that $f_i = f_i^{\#}$ holds point-wise on $V$.
    To see that we can choose them in $C_c(X)$, we use the fact that $f_i$ are continuous in an open neighborhood $U$ of $V \Subset U \Subset \Omega$ and use the cutoff function $\xi_V$ in an analogous manner as in the proof of Lemma \ref{lemma:ReflDiff}.
    Since $f_i^{\#}$ are continuous, they are in particular quasicontinuous, and the Leibniz rule \cite[Lemma 3.2.5]{FOT} now implies the equality of signed Radon measures
    \[
            d\Gamma\Span{f_1^{\#} \cdot f_2^{\#},f_3^{\#}} = f_1^{\#} \cdot d \Gamma\Span{f_2^{\#},f_3^{\#}} + f_2^{\#} \cdot d \Gamma\Span{f_1^{\#},f_3^{\#}}.
    \]
    Since the equalities $f_i^{\#} = f_i$ hold point-wise on $V$, we see that the Radon measures in the claim agree on $A$. This completes the proof of (1). We note that, if we wish to extend this argument to the case where $f_i$ are not necessarily continuous, we need to perform a somewhat delicate choices of $\mu$-representatives.
    
    The condition (2) is worked out in a similar manner with the difference that the Leibniz rule is replaced with the chain rule \cite[Theorem 3.2.2]{FOT} of $(\mathcal{E},\mathcal{F})$,
    \[
        d\Gamma\Span{ \psi \circ f_1^{\#},f_2^{\#}} = (\psi' \circ f_1^{\#})\cdot d\Gamma\Span{f_1^{\#},f_2^{\#}},
    \]
    where $\psi$ is as in the claim.
\end{proof}

Lastly, we state the validity of a Poincar\'e type inequality. We note that this is not yet the Poincar\'e inequality in Definition \ref{def:PI(Dir)} because we have not yet verified that $\Gamma^\Omega\Span{\cdot}$ are the energy measures of $(\mathcal{E}^\Omega,\mathcal{F}(\Omega))$ in the sense of Definition \ref{def:Dir}.
Indeed, we need the regularity for the energy measures to be well-defined in the first place.

\begin{lemma}\label{lemma:PIRefl}
    There are constants $C,\sigma \geq 1$ such that the following holds. For all $x \in \widetilde{\Omega}$, $r > 0$ and $f \in \mathcal{F}(\Omega)$,
    \[
    \int_{D(x,r)} (f - f_{D(x,r)})^2 \, d\mu \leq C \Psi(r) \int_{D(x,\sigma r)} \, d\Gamma^\Omega\Span{f}.
    \]
\end{lemma}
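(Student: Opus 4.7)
The plan is to adapt the Boman-type chaining argument used by Gyrya and Saloff-Coste \cite[Section 3]{SaloffCoste2011NeumannAD} for the Gaussian case. The three main ingredients are the Poincar\'e inequality \ref{eq:PI} on $(X,d,\mu)$ (available from \ref{eq:HKMain} via Lemma \ref{lemma:charHK}), V\"ais\"al\"a's Lemma \ref{lemma:vaisala} comparing $\rho$-balls with connected components of ambient $d$-balls, and the doubling of $\mu$ on $(\widetilde{\Omega},\rho)$ from Lemma \ref{lemma:IUD-doubling}.

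First, I would reduce to the case $x \in \Omega$: since $\mu(\widetilde{\Omega}\setminus\Omega)=0$ (Remark \ref{rem:L2(Omega)}) and $\mu$ is doubling on $(\widetilde{\Omega},\rho)$, both sides of the inequality are continuous under approximating $x$ by points of $\Omega$, so the general case follows from the case $x \in \Omega$. For $x \in \Omega$ and $r > 0$, I would next construct a Whitney-type covering of $D(x,r)\cap\Omega$ by ambient balls $B_i = B(y_i,r_i)$ with $r_i$ of order $\dist(y_i, X\setminus\Omega)$, chosen small enough that the dilates $B(y_i,\sigma_* r_i)$ required by \ref{eq:PI} are still contained in $\Omega$, and with bounded overlap via a standard Besicovitch-type extraction. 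By Remark \ref{rem:B=D} each such $B_i$ coincides with the corresponding $\rho$-ball, and since $B_i\Subset\Omega$, the energy measure $d\Gamma^\Omega\Span{f}$ on $B_i$ agrees with $d\Gamma\Span{f^{\#}}$ for a local representative $f^{\#}\in\mathcal{F}$ of $f$; thus the ambient \ref{eq:PI} applied on each $B_i$ gives an $L^2$-oscillation bound in terms of $\Gamma^\Omega\Span{f}$ on a slightly enlarged Whitney ball.

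The heart of the argument is a chain construction. Given two Whitney balls $B_i$ and $B_j$ in $D(x,r)\cap\Omega$, the inner uniformity of $\Omega$ produces a curve in $\Omega$ between $y_i$ and $y_j$ whose distance to $X\setminus\Omega$ is controlled from below by the product ratio in Definition \ref{def:IUD}(2); V\"ais\"al\"a's Lemma \ref{lemma:vaisala} then allows one to shadow this curve by a finite sequence of overlapping Whitney balls whose length, radii, and pairwise intersections are controlled uniformly in terms of $D,c_0,C_0$. A standard Boman-type telescoping argument then sums the local Poincar\'e estimates along these chains into the required global inequality on $D(x,r)$, with the dilation constant $\sigma$ in the statement absorbing the geometric chain constants.

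The main obstacle is precisely the chain construction and the associated bookkeeping: producing uniformly controlled sequences of Whitney balls connecting arbitrary pairs in $D(x,r)\cap\Omega$, tracking overlaps, and bounding the cumulative measure of chain members by a bounded multiple of the endpoint masses. This is exactly where V\"ais\"al\"a's theorem is indispensable, since it ensures that the geometric detour needed to stay inside $\Omega$ is controlled by the path metric $\rho$ and not by the ambient metric $d$, so that chains remain confined to a $\rho$-ball $D(x,\sigma r)$ of controlled radius.
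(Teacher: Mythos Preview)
Your approach is correct and is exactly what the paper does: it simply cites the Whitney covering/Boman chaining argument of Gyrya--Saloff-Coste \cite[Theorem 3.12]{SaloffCoste2011NeumannAD} (and its sub-Gaussian analogue for uniform domains in \cite[Subsection 5.1]{murugan2024heat}), noting that the same proof carries over verbatim with $\Psi(r)$ in place of $r^2$. One minor point: the chain of Whitney balls along an inner uniform curve, and its confinement to a $\rho$-ball $D(x,\sigma r)$, follow directly from the length and twisted-cone conditions in Definition \ref{def:IUD} rather than from V\"ais\"al\"a's Lemma \ref{lemma:vaisala}, which is not actually needed here and which the paper reserves for the cutoff construction in Lemma \ref{lemma:cutoffs}.
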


\begin{proof}
    This has been proven in the Gaussian case ($\Psi(r) = r^2$) using a Whitney covering argument \cite[Theorem 3.12]{SaloffCoste2011NeumannAD}. See also an analogous result with an almost identical proof in the sub-Gaussian case when $\Omega \subseteq X$ is a uniform domain \cite[Subsection 5.1]{murugan2024heat}. The same proof works in the present setting as well, and we thus omit the details here.
\end{proof}

\section{Main theorem}\label{sec:MainThm}

This section proves the main theorem of the paper, and we work with the same notation and assumptions as discussed in the beginning of Section \ref{sec:aux}.
We first state a more general version of the main theorem stated in Introduction.
\begin{theorem}[Theorem \ref{thm:Main}]\label{thm:MainMain}
    $(\mathcal{E}^\Omega,\mathcal{F}(\Omega))$ is a strongly local regular Dirichlet form on $L^2(\widetilde{\Omega},\mu)$ satisfying the heat kernel estimates \ref{eq:HKMain}.
\end{theorem}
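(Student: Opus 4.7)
Lemma \ref{lemma:ReflDiff} already delivers that $(\mathcal{E}^\Omega,\mathcal{F}(\Omega))$ is a strongly local Dirichlet form on $L^2(\widetilde{\Omega},\mu)$, so the plan is to establish regularity and the heat kernel estimates \ref{eq:HKMain}. By Lemma \ref{lemma:charHK}, applied on the complete geodesic doubling space $(\widetilde{\Omega},\rho,\mu)$ (Lemma \ref{lemma:IUD-doubling}), the estimate \ref{eq:HKMain} reduces, once regularity is in place, to \ref{eq:PI} and \ref{eq:CE} for the reflected form. The Poincar\'e inequality is already secured by Lemma \ref{lemma:PIRefl}, provided the measures $\Gamma^\Omega\langle\cdot\rangle$ are identified with the genuine energy measures of the regular form $(\mathcal{E}^\Omega,\mathcal{F}(\Omega))$; this identification is forced by the definition of $\Gamma^\Omega$ together with the Leibniz and chain rules of Lemma \ref{lemma:Leibniz}. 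So the two real tasks are to construct a sufficiently rich supply of intrinsic cutoff functions in $\mathcal{F}(\Omega)\cap C_c(\widetilde{\Omega})$ and to check that they satisfy the \ref{eq:CE} energy bound.

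For the cutoff construction, the key device is a transfer principle based on Väisälä's Lemma \ref{lemma:vaisala}. Given $x\in\widetilde{\Omega}$ and $R>0$, by density of $\Omega$ in $\widetilde{\Omega}$ I would pick $x_0\in\Omega$ close to $x$ in $\rho$ and take the ambient cutoff $\eta\in\mathcal{F}$ provided by \ref{eq:CE} on $(\mathcal{E},\mathcal{F})$ for a nested pair $B(x_0,s_1)\subseteq B(x_0,s_2)$ whose radii are calibrated to $R$ via the Väisälä constant $\tau$. Recalling from Remark \ref{rem:Holder} that $\eta$ is Hölder continuous, and writing $\mathcal{C}$ for the connected component of $B(x_0,s_2)\cap\Omega$ containing $x_0$, Lemma \ref{lemma:vaisala} yields the sandwich $D(x,R)\cap\Omega\subseteq\mathcal{C}\subseteq D(x,2R)\cap\Omega$. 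After confining (a suitable modification of) $\eta$ to $\mathcal{C}$, its zero-extension off $\mathcal{C}$ gives a function $\xi\in\mathcal{F}(\Omega)\cap C_c(\widetilde{\Omega})$ that is a cutoff for $D(x,R)\subseteq D(x,2R)$. The \ref{eq:CE} estimate for $\xi$ at every scale $r\leq 3R$ then follows from the ambient \ref{eq:CE} bound for $\eta$ on a comparable $d$-ball by a second application of Lemma \ref{lemma:vaisala} at scale $r$, together with the comparison $\mu(D(y,r))\asymp\mu(B(y,r))$ extracted from the proof of Lemma \ref{lemma:IUD-doubling}.

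With these cutoffs in hand, regularity splits into two density statements. Density of $\mathcal{F}(\Omega)\cap C_c(\widetilde{\Omega})$ in $(C_c(\widetilde{\Omega}),\|\cdot\|_\infty)$ I would obtain by Stone--Weierstrass on the locally compact Hausdorff space $\widetilde{\Omega}$: bounded elements of $\mathcal{F}(\Omega)$ form an algebra, and the cutoffs above separate points and cover $\widetilde{\Omega}$. For density in $(\mathcal{F}(\Omega),\mathcal{E}_1^\Omega)$, given $f\in\mathcal{F}(\Omega)$ I would first apply the Markov truncation to reduce to bounded $f$, multiply by a telescoping sequence of global intrinsic cutoffs to reduce to compactly supported $f$, and then on each $V\Subset\Omega$ approximate the local ambient representative $f^\#\in\mathcal{F}$ by $\mathcal{F}\cap C_c(X)$ using regularity of $(\mathcal{E},\mathcal{F})$, multiplying by the intrinsic cutoffs to confine the approximants to $\mathcal{F}(\Omega)\cap C_c(\widetilde{\Omega})$; Lemma \ref{lemma:FunAnal} controls the resulting $\mathcal{E}_1^\Omega$ errors while Lemma \ref{lemma:Leibniz} handles the product rule for $\Gamma^\Omega$. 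Once regularity is established, Lemma \ref{lemma:PIRefl}, the \ref{eq:CE} bound produced above, and Lemma \ref{lemma:charHK} together yield \ref{eq:HKMain}.

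The main obstacle is precisely the confinement step in the second paragraph: the ambient cutoff $\eta$ naturally has support in the full $d$-ball $B(x_0,s_2)$, which may intersect other connected components of $B(x_0,s_2)\cap\Omega$ and thus touch parts of $\widetilde{\Omega}$ far outside $D(x,2R)$, whereas the naive product $\eta\cdot\mathds{1}_\mathcal{C}$ breaks both continuity and $\mathcal{F}$-membership. The flexibility to carry out the confinement rigorously will come from applying Väisälä's lemma at slightly different scales to produce a controlled safety margin between $\mathcal{C}$ and the other components, combined with the quantitative Hölder regularity from Remark \ref{rem:Holder}, which allows $\eta$ to be arranged so that it decays to $0$ before entering the "bad" portion of $B(x_0,s_2)\cap\Omega$. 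This is the step that genuinely uses the inner-uniformity hypothesis via the Väisälä sandwich, and is the reason the argument does not extend to arbitrary open sets.
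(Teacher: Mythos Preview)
Your global strategy coincides with the paper's: transfer cutoffs via V\"ais\"al\"a, deduce regularity from the cutoffs, identify energy measures, then apply Lemma~\ref{lemma:charHK}. Two specific points, however, need correction.

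The confinement obstacle you highlight is resolved more cheaply than your proposal suggests. The paper takes $\mathcal{C}$ to be the component of $B(x_0,4r)\cap\Omega$ containing $x_0$ while the ambient cutoff $\xi_0$ is supported only in $B(x_0,2r)$. With this buffer the naive product $\xi:=\xi_0\cdot\mathds{1}_{\mathcal{C}}$ already lies in $\mathcal{F}_{\loc}(\Omega)$: every $z\in\Omega$ has a neighborhood on which $\xi$ is either identically $\xi_0$ or identically $0$, because $\xi_0$ vanishes on $B(x_0,4r)\setminus\overline{B(x_0,2r)}$. H\"older continuity of $\xi_0$ in $d$, hence in $\rho$, then extends $\xi$ continuously to $\widetilde{\Omega}$. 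No fine arrangement of H\"older decay is needed, and for the \ref{eq:CE} bound no second application of V\"ais\"al\"a either: the inclusion $D(y,r)\cap\Omega\subseteq B(y,r)$ is immediate from $d\le\rho$.

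More substantively, your route to the density of $\mathcal{F}(\Omega)\cap C_c(\widetilde{\Omega})$ in $(\mathcal{F}(\Omega),\mathcal{E}_1^\Omega)$ has a genuine gap. After reducing to bounded $f$ with compact support in $\widetilde{\Omega}$, that support may still meet $\widetilde{\Omega}\setminus\Omega$, and near such points there is \emph{no} relatively compact $V\Subset\Omega$ on which an ambient representative $f^\#\in\mathcal{F}$ is available; there is therefore nothing to approximate by $\mathcal{F}\cap C_c(X)$. This is exactly the failure of the extension property for inner uniform domains (the slit disk) that motivates the paper. Accordingly the paper does not invoke ambient regularity at this step. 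Instead it uses the intrinsic cutoffs to build a partition of unity $\{\varphi_{v,n}\}_{v\in Z_n}$ on $\widetilde{\Omega}$ at scale $2^{-n}$, defines the discretization $f_n:=\sum_{v\in Z_n} f_{D(v,2^{-n})}\,\varphi_{v,n}\in\mathcal{F}(\Omega)\cap C(\widetilde{\Omega})$, and bounds $\mathcal{E}^\Omega(f_n)\lesssim\mathcal{E}^\Omega(f)$ via the Poincar\'e inequality of Lemma~\ref{lemma:PIRefl}; the Mazur argument from Lemma~\ref{lemma:FunAnal} then upgrades $L^2$-convergence to $\mathcal{E}_1^\Omega$-convergence of convex combinations. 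Only afterwards is the reduction to compact support performed, using the intrinsic cutoff Sobolev inequality (Corollary~\ref{cor:CS}) rather than a bare product-rule estimate.
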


\subsection{Regularity}
The first objective is to establish the regularity of $(\mathcal{E}^\Omega,\mathcal{F}(\Omega))$.
The proof heavily relies on the cutoff energy condition \ref{eq:CE} of $(\mathcal{E},\mathcal{F})$, which holds according to Lemma \ref{lemma:charHK}.

\begin{lemma}\label{lemma:cutoffs}
    There are constants $\delta > 0$ and $C \geq 1$ such that the following holds.
    For all $x \in \widetilde{\Omega}$ and $R \in (0,\diam(\widetilde{\Omega}))$ there is a cutoff function $\xi \in \mathcal{F}(\Omega)$ for $D(x,r) \subseteq D(x,2r)$ satisfying
    \[
        \int_{D(y,r) \cap \Omega} \, d\Gamma^\Omega\Span{\xi} \leq C \left( \frac{r}{R} \right)^{\delta} \frac{\mu(D(x,r))}{\Psi(r)}.
    \]
    for all $y \in \widetilde{\Omega}$ and $0 < r \leq 3R$.
\end{lemma}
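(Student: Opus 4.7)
The plan is to transport the ambient cutoff energy condition $\textup{CE}_\delta(\Psi)$ (which holds for $(\mathcal{E},\mathcal{F})$ by Lemma \ref{lemma:charHK}) to $\widetilde{\Omega}$ by combining Väisälä's lemma (Lemma \ref{lemma:vaisala}) with a covering argument. Fix $x \in \Omega$ and $R \in (0,\diam(\widetilde{\Omega}))$, and let $\tau > 1$ be the Väisälä constant from Lemma \ref{lemma:vaisala}; the case $x \in \widetilde{\Omega} \setminus \Omega$ follows by density.

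The core building block is a local cutoff at each base point $y \in \Omega$ and scale $\sigma > 0$. Let $\mathcal{C}_{y,\sigma}$ be the connected component of $B(y,\sigma) \cap \Omega$ containing $y$, and let $\eta_{y,\sigma} \in \mathcal{F}$ be an ambient cutoff for $B(y,\sigma/2) \subseteq B(y,\sigma)$ provided by $\textup{CE}_\delta(\Psi)$. Put $\xi_{y,\sigma} := \eta_{y,\sigma} \cdot \mathds{1}_{\mathcal{C}_{y,\sigma}}$. By Lemma \ref{lemma:vaisala}, $D(y,\sigma) \cap \Omega \subseteq \mathcal{C}_{y,\sigma} \subseteq D(y,\tau\sigma) \cap \Omega$. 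The crucial observation is that the $\Omega$-boundary of $\mathcal{C}_{y,\sigma}$ is contained in the ambient sphere $\{z : d(y,z) = \sigma\}$, where $\eta_{y,\sigma}$ vanishes by the (Hölder) continuity given by Remark \ref{rem:Holder}. Thus $\xi_{y,\sigma}$ is continuous on $\Omega$, and an approximation by functions of the form $\eta_{y,\sigma} \cdot \phi_n$, with $\phi_n \in \mathcal{F} \cap C_c(\mathcal{C}_{y,\sigma})$ supported in shrinkings of $\mathcal{C}_{y,\sigma}$ and $\phi_n \uparrow \mathds{1}_{\mathcal{C}_{y,\sigma}}$, combined with Lemma \ref{lemma:FunAnal}, yields $\xi_{y,\sigma} \in \mathcal{F}$ with $d\Gamma\Span{\xi_{y,\sigma}} \leq \mathds{1}_{\mathcal{C}_{y,\sigma}}\, d\Gamma\Span{\eta_{y,\sigma}}$.

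Given this building block, I choose $\sigma := R/\tau$ and cover $\overline{D(x,R)} \cap \Omega$ by balls $\{D(y_j,\sigma/(2\tau)) \cap \Omega\}_{j=1}^N$ with $y_j \in D(x,R) \cap \Omega$; by Lemma \ref{lemma:IUD-doubling} the number $N$ depends only on $\tau$ and the doubling constant, and in particular is independent of $R$. Each $\xi_{y_j,\sigma}$ equals $1$ on $D(y_j,\sigma/(2\tau)) \cap \Omega$ (since $\sigma/(2\tau) < \sigma/2$ and $d \leq \rho$ imply this set lies in $B(y_j,\sigma/2) \cap \mathcal{C}_{y_j,\sigma}$), and it is supported inside $\mathcal{C}_{y_j,\sigma} \subseteq D(y_j,R) \cap \Omega \subseteq D(x,2R) \cap \Omega$ (using $\rho(x,y_j) < R$ and $\tau\sigma = R$). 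Consequently
\[
    \xi := \min\!\Big(1,\, \sum_{j=1}^N \xi_{y_j,\sigma}\Big)
\]
is a cutoff for $D(x,R) \subseteq D(x,2R)$ and belongs to $\mathcal{F}$ by the Markov property and bilinearity, hence to $\mathcal{F}(\Omega)$ via Definition \ref{def:LocDir}. The energy bound on $D(y,r) \cap \Omega$ for $0 < r \leq 3R$ then follows from $d\Gamma^\Omega\Span{\xi} \leq N \sum_j d\Gamma\Span{\eta_{y_j,\sigma}}$, the ambient $\textup{CE}_\delta(\Psi)$ applied to each $\eta_{y_j,\sigma}$ at ambient scale $\sigma/2$ (with the $\tau$-dependent factor absorbed in the constant), and the comparability $\mu(B(y,r)) \approx \mu(D(y,r))$ recorded in the proof of Lemma \ref{lemma:IUD-doubling}.

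The hard part is the local step $\xi_{y,\sigma} \in \mathcal{F}$ together with the tight measure bound on its energy, since multiplication by the indicator of an open set is not generally admissible in a Dirichlet form. This is precisely where Lemma \ref{lemma:vaisala} does the heavy lifting: it confines the $\Omega$-boundary of $\mathcal{C}_{y,\sigma}$ to the set where the ambient cutoff $\eta_{y,\sigma}$ already vanishes, making the product continuous on $\Omega$ and amenable to the limit argument via the lower semicontinuity of Lemma \ref{lemma:FunAnal}.
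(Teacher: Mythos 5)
Your overall strategy is the paper's: restrict an ambient cutoff supplied by \ref{eq:CE} to the connected component of a ball intersected with $\Omega$, use Väisälä's theorem (Lemma \ref{lemma:vaisala}) to see that this component is trapped between two $\rho$-balls of comparable radii, and inherit the energy bound from $D(y,r)\cap\Omega\subseteq B(y,r)$ together with $\mu(D(y,r))\approx\mu(B(y,r))$. However, your local building block $\xi_{y,\sigma}:=\eta_{y,\sigma}\cdot\mathds{1}_{\mathcal{C}_{y,\sigma}}$ has a genuine gap. First, the claim $\xi_{y,\sigma}\in\mathcal{F}$ is false in general: for the slit disk with the classical Dirichlet energy, if $B(y,\sigma)$ straddles the slit and $\eta_{y,\sigma}\equiv 1$ near the slit, then $\eta_{y,\sigma}\mathds{1}_{\mathcal{C}_{y,\sigma}}$ jumps across the slit and is not in $W^{1,2}(\R^2)$; only membership in $\mathcal{F}_{\loc}(\Omega)$ (hence in $\mathcal{F}(\Omega)$) can be expected. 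Second, and consistently with this, your approximation argument cannot close: functions $\phi_n\in\mathcal{F}\cap C_c(\mathcal{C}_{y,\sigma})$ with $\phi_n\uparrow\mathds{1}_{\mathcal{C}_{y,\sigma}}$ must decay to $0$ not only near $\partial\mathcal{C}_{y,\sigma}\cap\Omega$ (which your ``crucial observation'' places on the sphere $\{d(y,\cdot)=\sigma\}$, where $\eta_{y,\sigma}$ vanishes) but also near $\partial\mathcal{C}_{y,\sigma}\cap\partial\Omega$, where $\eta_{y,\sigma}$ may equal $1$. There the Leibniz term $\int\eta_{y,\sigma}^2\,d\Gamma\Span{\phi_n}$ blows up (in the slit-disk example it grows like $n$), so $\sup_n\mathcal{E}(\eta_{y,\sigma}\phi_n)=\infty$ and Lemma \ref{lemma:FunAnal} does not apply; the asserted bound $d\Gamma\Span{\xi_{y,\sigma}}\leq\mathds{1}_{\mathcal{C}_{y,\sigma}}d\Gamma\Span{\eta_{y,\sigma}}$ is likewise unjustified, since the cross term is not shown to vanish.

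The repair is exactly the radius mismatch used in the paper: take $\mathcal{C}$ to be the component of $B(y,2\sigma)\cap\Omega$ (say) while $\eta$ is a cutoff for $B(y,\sigma/2)\subseteq B(y,\sigma)$, so that $\eta$ vanishes identically on an $\Omega$-open neighbourhood of $\partial_\Omega\mathcal{C}$. Then the glued function is, near every point of $\Omega$, equal either to $\eta$ or to $0$, which gives $\xi\in\mathcal{F}_{\loc}(\Omega)$ directly and yields $\Gamma^\Omega\Span{\xi}\leq\mathds{1}_{\mathcal{C}}\,\Gamma\Span{\eta}$ by strong locality, with no approximation step. One further point you omit: to obtain a cutoff function on $\widetilde{\Omega}$ (an element of $C_c(\widetilde{\Omega})$ equal to $1$ on $D(x,r)$, not merely on $D(x,r)\cap\Omega$) you must extend $\xi$ continuously to the completion; this is where the Hölder continuity of the \ref{eq:CE} cutoffs from Remark \ref{rem:Holder}, transferred to $\rho$ via $d\leq\rho$, is actually needed. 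With these corrections your covering/summation step and the final energy estimate go through as in the paper.
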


\begin{proof}
    The main technical detail in the proof is to ensure that suitable restrictions of cutoff functions in the ambient space $X$ are cutoff functions in $\Omega$ with respect to the path metric $\rho$.
    To this end, we use the result of Väisälä, stated in Lemma \ref{lemma:vaisala}.
    
    Let $x \in \widetilde{\Omega}$ and $R \in (0,\diam(\widetilde{\Omega}))$. It is clear that we may assume $x \in \Omega$.
    Furthermore, it follows from a covering argument that it is sufficient to construct cutoff functions for $D(x,r) \subseteq D(x,\sigma r)$, $\sigma > 1$ is any constant independent of $x$ and $r$, such that the desired energy estimate holds.
    
    First, let $\xi_0 \in \mathcal{F}$ be a cutoff function for $B(x,r) \subseteq B(x,2r)$ provided by \ref{eq:CE}.
    We let $\mathcal{C}$ be the connected component of $B(x,4r) \cap \Omega$ containing $x$, and consider the Borel function $\xi$ defined on $\Omega$ according to
    \[
    \xi(z) := 
    \begin{cases}
        \xi_0(z) & \text{ if } z \in \mathcal{C}\\
        0 & \text{ if } z \in \Omega \setminus \mathcal{C}.
    \end{cases}
    \]
    We first show that $\xi \in \mathcal{F}_{\loc}(\Omega)$.
    To this end, note that it follows from the present topological assumptions that $\mathcal{C} \subseteq \Omega$ is an open subset. Also, by its definition, $\xi$ is identically zero in an open neighborhood of $\Omega \setminus \mathcal{C} \subseteq \Omega$. Hence, every point $x \in \Omega$ admits a relatively compact open neighborhood $x \in V_x \Subset \Omega$ such that $\xi \mathds{1}_{V_x} = f^\# \mathds{1}_{V_x}$ for some $f^\# \in \mathcal{F}$. It then follows from a similar argument as in the proof of Lemma \ref{lemma:ReflDiff} that $\xi \in \mathcal{F}_{\loc}(\Omega)$.
    Moreover, the strong locality implies $\mathcal{E}_1^\Omega(\xi) \leq \mathcal{E}_1(\xi_0) < \infty$, which proves $\xi \in \mathcal{F}(\Omega)$.
    Furthermore, since $\xi_0$ is Hölder continuous in the metric $d$ according to Remark \ref{rem:Holder} and $d \leq \rho$, $\xi : \Omega \to \R$ is Hölder continuous in the metric $\rho$. Therefore, $\xi$ extends continuously to the completion $\widetilde{\Omega}$, meaning $\xi \in \mathcal{F}(\Omega) \cap C_c(\widetilde{\Omega})$.

    The energy estimates now follows easily from the definitions of $\xi$ and $\Gamma^\Omega\Span{\cdot}$,
    \[
        \int_{D(y,r) \cap \Omega} \, d\Gamma^\Omega\Span{\xi} \leq \int_{B(y,r)} \, d\Gamma\Span{\xi_0} \lesssim \left( \frac{r}{R} \right)^\delta \frac{\mu(B(y,r))}{\Psi(r)} \lesssim \left( \frac{r}{R} \right)^\delta \frac{\mu(D(y,r))}{\Psi(r)}.
    \]
    The first inequality follows from the definition of $\xi$ and $D(y,r) \cap \Omega \subseteq B(y,r)$. The last inequality follows from $\mu(D(x,r)) \approx \mu(B(x,r))$ which can be seen from the proof of Lemma \ref{lemma:IUD-doubling}.

    Lastly, we show that $\xi$ is a cutoff function.
    If $\tau > 1$ is as in Lemma \ref{lemma:vaisala}, then $D(x,r) \cap \Omega \subseteq \mathcal{C} \subseteq D(x,2\tau r) \cap \Omega$. Therefore $\xi|_{\Omega \cap D(x,r)} = 1$ and $\xi|_{\Omega \setminus D(x,2\tau r)} = 0$.
    By its definition $0 \leq \xi \leq 1$, meaning $\xi$ is a cutoff function for $D(x,r) \cap \Omega \subseteq D(x,2\tau r) \cap \Omega$.
    Since $(\widetilde{\Omega},\rho)$ is a completion of the length space $(\Omega,\rho)$,
    it follows that the continuous extension of $\xi$ to $\widetilde{\Omega}$ is a cutoff function for $D(x,r) \subseteq D(x,2\tau r)$. The proof is completed by choosing $\sigma = 2\tau$.
\end{proof}

\begin{remark}
    A simplified version of the previous argument can be used to establish the cutoff energy condition when $\Omega$ is a uniform domain in the sense of \cite[Definition 2.3]{murugan2024heat}. Indeed, since the metric on the uniform domain $\Omega \subseteq X$ is simply the restriction of the ambient metric $d$, one may take the restriction of the cutoff functions from the ambient space directly, without the need to analyze the connected components.
    This observation answers another question of Murugan, posing whether the sub-Gaussian heat kernel estimates for reflected diffusion on uniform domains can be established without relying on the extension operator \cite[Section 6.3]{murugan2024heat}.
\end{remark}

\begin{corollary}\label{cor:Ucap}
    There is a constant $C \geq 1$ satisfying the following condition.
    For every $x \in X$ and $R \in (0,\diam(\widetilde{\Omega}))$
    there is a cutoff function $\varphi \in \mathcal{F}(\Omega)$ for $D(x,r) \subseteq D(x,2r)$ such that
    \[
    \mathcal{E}^\Omega(\varphi) \leq C \frac{\mu(D(x,r))}{\Psi(r)}.
    \]
\end{corollary}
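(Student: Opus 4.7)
The plan is to specialize Lemma~\ref{lemma:cutoffs} to the case $R = r$. This produces a cutoff function $\varphi \in \mathcal{F}(\Omega)$ for $D(x,r) \subseteq D(x,2r)$ satisfying, for every $y \in \widetilde{\Omega}$ and every $0 < s \leq 3r$,
\[
    \int_{D(y,s) \cap \Omega} \, d\Gamma^\Omega\Span{\varphi} \leq C \left( \frac{s}{r} \right)^\delta \frac{\mu(D(y,s))}{\Psi(s)}.
\]
I would then evaluate this estimate at $(y,s) = (x,3r)$, so the factor $(s/r)^\delta$ becomes the harmless constant $3^\delta$, and absorb $\mu(D(x,3r))/\Psi(3r)$ into $\mu(D(x,r))/\Psi(r)$ using the doubling of $\mu$ on $(\widetilde{\Omega},\rho)$ provided by Lemma~\ref{lemma:IUD-doubling} together with the two-sided doubling of $\Psi$ from \eqref{eq:PsiRad}.

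The remaining point is to confirm that the local integral over $D(x,3r)\cap \Omega$ already captures the whole energy $\mathcal{E}^\Omega(\varphi) = \Gamma^\Omega\Span{\varphi}(\widetilde{\Omega})$. The construction in the proof of Lemma~\ref{lemma:cutoffs} yields $\varphi \in \mathcal{F}(\Omega) \cap C_c(\widetilde{\Omega})$ with $\varphi \equiv 0$ on the closed set $\widetilde{\Omega} \setminus D(x,2r)$; by continuity, $\varphi$ is identically zero on the open set $\widetilde{\Omega} \setminus \overline{D(x,2r)}$, which contains $\widetilde{\Omega} \setminus D(x,3r)$. The strong locality of $(\mathcal{E}^\Omega,\mathcal{F}(\Omega))$ established in Lemma~\ref{lemma:ReflDiff}, in its standard reformulation that $\Gamma^\Omega\Span{\varphi}$ annihilates open sets on which $\varphi$ is constant, then forces
\[
    \Gamma^\Omega\Span{\varphi}\bigl(\widetilde{\Omega} \setminus D(x,3r)\bigr) = 0,
\]
so combining with the previous bound gives
\[
    \mathcal{E}^\Omega(\varphi) = \Gamma^\Omega\Span{\varphi}\bigl(D(x,3r) \cap \Omega\bigr) \leq C' \, \frac{\mu(D(x,r))}{\Psi(r)},
\]
as desired.

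No substantive obstacle arises here: the corollary is essentially a one-line deduction from Lemma~\ref{lemma:cutoffs} once strong locality is available. The only delicate point is the localization of $\Gamma^\Omega\Span{\varphi}$ inside $\overline{D(x,2r)}$, and that in turn rests on Lemma~\ref{lemma:ReflDiff}, whose proof was already engineered precisely to handle the passage from $\mathcal{F}$ on $X$ to $\mathcal{F}(\Omega)$ on $\widetilde{\Omega}$. Should one wish to avoid invoking the open-set formulation of strong locality at this stage of the paper, an equivalent route is to note that $\varphi$ coincides, on the component $\mathcal{C}$ used in the construction of Lemma~\ref{lemma:cutoffs}, with the ambient cutoff $\xi_0 \in \mathcal{F}$, apply the (regular) strong locality of $(\mathcal{E},\mathcal{F})$ to $\xi_0$ to localize $\Gamma\Span{\xi_0}$ inside $\overline{B(x,2r)}$, and then transfer this back to $\Gamma^\Omega\Span{\varphi}$ via the defining identity for the local energy measure in Definition~\ref{def:LocDir}.
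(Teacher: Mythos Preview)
Your proposal is correct and follows essentially the same approach as the paper: the paper's proof is the single line ``The claim follows from Lemma~\ref{lemma:cutoffs} and the strong locality,'' and you have simply unpacked this by specializing Lemma~\ref{lemma:cutoffs} at the appropriate scale, evaluating the local estimate on a ball containing the support of $\varphi$, and invoking strong locality to identify that local integral with the total energy $\mathcal{E}^\Omega(\varphi)$.
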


\begin{proof}
    The claim follows from Lemma \ref{lemma:cutoffs} and the strong locality.
\end{proof}

\begin{corollary}\label{cor:CS}
    There are constants $\delta > 0$ and $C \geq 1$ such that the following holds.
    For all $x \in \widetilde{\Omega}$ and $R \in (0,\diam(\widetilde{\Omega}))$ there is a cutoff function $\xi \in \mathcal{F}(\Omega)$ for $D(x,r) \subseteq D(x,2r)$ satisfying
    \[
        \int_{D(y,r) \cap \Omega} f^2 \, d\Gamma^\Omega\Span{\xi} \leq C\left( \frac{r}{R} \right)^\delta \left( \int_{D(y,2r) \cap \Omega} \, d\Gamma^\Omega\Span{f} + \frac{1}{\Psi(r)} \int_{D(y,2r) \cap \Omega}f^2 \, d\mu \right).
    \]
    for all $y \in \widetilde{\Omega}$, $0 < r \leq 3R$ and $f \in \mathcal{F}(\Omega) \cap C(\widetilde{\Omega})$.
\end{corollary}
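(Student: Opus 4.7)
The corollary upgrades the cutoff energy condition of Lemma \ref{lemma:cutoffs} to a cutoff Sobolev inequality in the reflected setting. My plan is to repeat the construction of Lemma \ref{lemma:cutoffs}, but to start from an ambient cutoff $\xi_0 \in \mathcal{F}$ that itself satisfies the full cutoff Sobolev inequality in $X$ (Lemma \ref{lemma:CS}), rather than only the cutoff energy condition. By the identical argument given in Lemma \ref{lemma:cutoffs}, the resulting restriction $\xi := \xi_0 \mathds{1}_{\mathcal{C}}$ (extended by zero), where $\mathcal{C}$ is the connected component of $B(x,4R)\cap \Omega$ containing $x$, belongs to $\mathcal{F}(\Omega)\cap C(\widetilde{\Omega})$ and is a cutoff function for $D(x,R)\subseteq D(x,2\tau R)$.

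Strong locality and the coincidence $\xi = \xi_0$ on $\mathcal{C}$ give $d\Gamma^\Omega\Span{\xi} = d\Gamma\Span{\xi_0}$ on $\mathcal{C}$, and combined with the inclusion $D(y,r)\cap \Omega \subseteq B(y,r)$ coming from $d \leq \rho$, this yields
\[
\int_{D(y,r)\cap \Omega} f^2\, d\Gamma^\Omega\Span{\xi} \;\leq\; \int_{B(y,r)\cap \mathcal{C}} f^2\, d\Gamma\Span{\xi_0}.
\]
The next step is to apply Lemma \ref{lemma:CS} to the pair $(\xi_0,\widetilde{f})$, where $\widetilde{f}\in \mathcal{F}$ is a global representative of $f$ agreeing with $f$ on an open neighborhood of $B(y,2r)\cap \mathcal{C}$. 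I construct $\widetilde{f}$ by multiplying a local $\mathcal{F}$-representative of $f$ (available on relatively compact subsets of $\Omega$ by the definition of $\mathcal{F}_{\loc}(\Omega)$) by a cutoff function supported in $X$; the product lies in $\mathcal{F}$ by the same Leibniz-rule/approximation argument as in Remark \ref{rem:CS}. Applying Lemma \ref{lemma:CS} then controls the right-hand side of the previous display by $\int_{B(y,2r)}d\Gamma\Span{\widetilde{f}}$ and $\Psi(r)^{-1}\int_{B(y,2r)}\widetilde{f}^2\,d\mu$, both of which are in turn bounded by the reflected quantities $\int_{D(y,Cr)\cap\Omega}d\Gamma^\Omega\Span{f}$ and $\Psi(r)^{-1}\int_{D(y,Cr)\cap \Omega}f^2\,d\mu$ via strong locality, the equality of energy measures on relatively compact subsets of $\Omega$, and the volume comparison $\mu(B(y,r))\approx \mu(D(y,r))$ from the proof of Lemma \ref{lemma:IUD-doubling}.

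The main obstacle is producing the global representative $\widetilde{f}$ with quantitative energy control when $B(y,2r)\cap \mathcal{C}$ fails to be relatively compact in $\Omega$, which can occur when this set accumulates on $\partial \Omega$; a single local representative on one relatively compact $V \Subset \Omega$ then does not suffice. To handle this, I decompose $B(y,2r)\cap \mathcal{C}$ into countably many pieces that are relatively compact in $\Omega$ using a Whitney-type argument, with the covering geometry controlled through the inner uniformity (Definition \ref{def:IUD}) and Lemma \ref{lemma:vaisala}. The local representatives on these pieces are patched together using a partition of unity built from the cutoff functions provided by Lemma \ref{lemma:cutoffs}, and the Poincar\'e inequality (Lemma \ref{lemma:PIRefl}) is used to absorb the cross terms arising from the patching into the $L^2$ part of the right-hand side of the target estimate.
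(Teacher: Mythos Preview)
Your route diverges from the paper's and runs into a genuine obstruction. The paper does not go back to the ambient cutoff Sobolev inequality at all: it simply observes that the cutoff functions already produced in Lemma~\ref{lemma:cutoffs} satisfy \ref{eq:CE} on $(\widetilde{\Omega},\rho,\mu)$, that Lemma~\ref{lemma:PIRefl} gives the Poincar\'e inequality on the same space, and then invokes \cite[Proposition~3.4 and the proof of Proposition~4.10]{anttila2025approach}, where the implication ``\ref{eq:CE} $+$ \ref{eq:PI} $\Rightarrow$ \ref{eq:CS}'' is established abstractly for any such structure. Everything happens intrinsically in $\widetilde{\Omega}$; the ambient space $X$ is never revisited.

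Your plan, by contrast, requires a global representative $\widetilde{f}\in\mathcal{F}$ agreeing with $f$ on $B(y,2r)\cap\mathcal{C}$ and with $\int_{B(y,2r)}d\Gamma\langle\widetilde{f}\rangle$ controlled by $\int_{D(y,Cr)\cap\Omega}d\Gamma^\Omega\langle f\rangle$. This is essentially a (local) Sobolev extension problem, and it is exactly what fails for inner uniform domains --- the paper's introduction singles out the slit disk as a counterexample. Concretely, if $B(y,2r)$ straddles the slit and $\mathcal{C}$ wraps around the origin, then $f$ may take unrelated values on the two sides, and no $W^{1,2}(\mathbb{R}^2)$-function can match both. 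Your Whitney patching does not escape this: the partition-of-unity cutoffs at scale $r_i\to 0$ near $\partial\Omega$ carry energy $\sim\mu(B_i)/\Psi(r_i)$, and after applying the ambient \ref{eq:CS} ball-by-ball, the factor $(r_i/R)^{\delta}\Psi(r_i)^{-1}$ compares to $\Psi(r)^{-1}$ like $(r_i/r)^{\delta-\beta_U}$, which blows up since typically $\delta<\beta_U$. The Poincar\'e inequality lets you trade the oscillation part of $f$ for energy, but it cannot absorb the constant part $c_i^2\mu(B_i)$ carrying this divergent weight. In short, the step ``both of which are in turn bounded by the reflected quantities'' is where the argument breaks, and no amount of covering geometry repairs it --- this is precisely why the paper's intrinsic route through \ref{eq:CE} was introduced.
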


\begin{remark}
    The previous corollary is stated only for continuous $f$ for the same reason as discussed in Remark \ref{rem:QS}.
\end{remark}

\begin{proof}[Proof of Corollary \ref{cor:CS}]
    The cutoff functions $\xi$ for $D(x,r) \subseteq D(x,2r)$ in Lemma \ref{lemma:cutoffs} satisfy the desired energy inequality.
    This follows from the Poincar\'e inequality in Lemma \ref{lemma:PIRefl} and \cite[Proposition 3.4 and the proof of Proposition 4.10]{anttila2025approach}. We note that the \emph{precise representatives} used in Proposition \cite[Proposition 3.4]{anttila2025approach} for functions in $\mathcal{F}(\Omega) \cap C(\widetilde{\Omega})$ are the point-wise defined continuous representatives.
\end{proof}

\begin{proposition}\label{prop:SLRDF}
$(\mathcal{E}^\Omega,\mathcal{F}(\Omega))$ is a strongly local regular Dirichlet form on $L^2(\widetilde{\Omega},\mu)$.
\end{proposition}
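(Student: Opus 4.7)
The strategy is to invoke Lemma \ref{lemma:ReflDiff}, which already establishes that $(\mathcal{E}^\Omega, \mathcal{F}(\Omega))$ is a strongly local Dirichlet form on $L^2(\widetilde{\Omega}, \mu)$. Thus only the regularity remains to be proved: one must show that $\mathcal{F}(\Omega) \cap C_c(\widetilde{\Omega})$ is dense in both $(C_c(\widetilde{\Omega}), \norm{\cdot}_{L^\infty})$ and $(\mathcal{F}(\Omega), \mathcal{E}_1^\Omega)$.

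For the uniform density, I would use that the cutoff functions $\xi$ provided by Lemma \ref{lemma:cutoffs} are continuous, compactly supported, and lie in $\mathcal{F}(\Omega)$. Taking a scale $r < \rho(x,y)/3$, the cutoff for $D(x,r) \subseteq D(x,2r)$ separates any prescribed pair $x \neq y \in \widetilde{\Omega}$ and attains the value $1$ at $x$; hence $\mathcal{F}(\Omega) \cap C_c(\widetilde{\Omega})$ separates points of $\widetilde{\Omega}$ and is non-vanishing at every point. A local cutoff argument, based on the algebra property of $\mathcal{F} \cap L^\infty(X,\mu)$, shows that $\mathcal{F}(\Omega) \cap L^\infty(\widetilde{\Omega},\mu) \cap C_c(\widetilde{\Omega})$ is itself an algebra. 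Since $(\widetilde{\Omega}, \rho)$ is locally compact Hausdorff (Lemma \ref{lemma:IUD-doubling}), the Stone--Weierstrass theorem yields density in $C_0(\widetilde{\Omega})$, and hence in $C_c(\widetilde{\Omega})$ in the uniform norm.

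For density in $(\mathcal{F}(\Omega), \mathcal{E}_1^\Omega)$, I would proceed in three reductions. First, truncating $f$ to $f_n := (f \land n) \lor (-n)$ together with the Markov property and the lower semicontinuity from Lemma \ref{lemma:FunAnal} reduces the problem to $f \in \mathcal{F}(\Omega) \cap L^\infty(\widetilde{\Omega},\mu)$. Second, multiplying by a radial cutoff $\xi_R$ from Lemma \ref{lemma:cutoffs} for $D(x_0,R) \subseteq D(x_0,2R)$ and letting $R \to \infty$ reduces to $f$ with compact $\rho$-support in $\widetilde{\Omega}$; the energy bound for $f\xi_R$ follows from the Leibniz rule in Lemma \ref{lemma:Leibniz} combined with Corollary \ref{cor:CS}, while convergence in $\mathcal{E}_1^\Omega$ follows by a dominated-convergence argument, using that $\Gamma^\Omega\Span{f}$ is concentrated on $\Omega$. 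Third, for bounded $f$ of compact $\rho$-support, I would approximate by $f\eta_n$ where $\eta_n \in \mathcal{F} \cap C_c(\Omega)$ is an \emph{interior} cutoff with $\eta_n = 1$ on $V_n \Subset \Omega$ and $V_n \uparrow \Omega$, produced from ambient cutoffs via the regularity of $(\mathcal{E},\mathcal{F})$. Since $f \eta_n$ has compact support in $\Omega$ and locally agrees with an element of $\mathcal{F}$, it lies in $\mathcal{F}$, and can then be approximated in $\mathcal{E}_1$ by elements of $\mathcal{F} \cap C_c(X)$.

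The main obstacle will be this third step: controlling $\mathcal{E}^\Omega(f(1-\eta_n)) \to 0$ as the interior cutoffs $\eta_n$ exhaust $\Omega$. Expanding via the Leibniz rule, the term with $(1-\eta_n)^2 \, d\Gamma^\Omega\Span{f}$ vanishes in the limit by dominated convergence, since $\Gamma^\Omega\Span{f}$ is supported on $\Omega$ where $\eta_n \to 1$. The delicate part is controlling $\int f^2 \, d\Gamma^\Omega\Span{\eta_n}$ and the cross term; the cutoffs $\eta_n$ must be built carefully, via a Whitney-type decomposition of $\Omega$ adapted to $\dist(\cdot, X \setminus \Omega)$, so that Corollary \ref{cor:CS} absorbs these contributions into $\int_{\{\eta_n < 1\}} d\Gamma^\Omega\Span{f} + \Psi(r)^{-1} \int_{\{\eta_n < 1\}} f^2 \, d\mu$ for a suitable scale $r$, both of which tend to zero.
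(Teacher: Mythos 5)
Your reduction to regularity via Lemma \ref{lemma:ReflDiff}, the Stone--Weierstrass argument for uniform density, the truncation to bounded $f$, and the reduction to compact $\rho$-support via the cutoffs of Lemma \ref{lemma:cutoffs}/Corollary \ref{cor:CS} all match the structure of the paper's proof. The problem is your third step, and it is not merely delicate --- it is false. Approximating a bounded, compactly supported $f \in \mathcal{F}(\Omega)$ by $f\eta_n$ with \emph{interior} cutoffs $\eta_n \in \mathcal{F} \cap C_c(\Omega)$, $\eta_n \uparrow 1$ on $\Omega$, would show that functions compactly supported in $\Omega$ itself are $\mathcal{E}_1^\Omega$-dense in $\mathcal{F}(\Omega)$; that is the statement that the reflected form coincides with the Dirichlet-boundary-condition form, which fails whenever $\widetilde{\Omega} \setminus \Omega$ has positive capacity. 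Concretely, for the classical Dirichlet energy on $X = \R^n$ with $\Omega = B(0,1)$ and $f = \mathds{1}$, your step requires $\int_{\Omega} |\nabla \eta_n|^2\,dx \to 0$ with $\eta_n \in C_c(\Omega)$ and $\eta_n \to 1$ in $L^2$, i.e.\ $\mathds{1} \in W^{1,2}_0(B(0,1))$ --- exactly the failure discussed in the remark following Definition \ref{def:LocDir}. Your hoped-for absorption of $\int f^2\, d\Gamma^\Omega\Span{\eta_n}$ via Corollary \ref{cor:CS} cannot rescue this: in a Whitney decomposition adapted to $\dist(\cdot, X\setminus\Omega)$ the scales $r_Q$ shrink near the boundary, so the term $\Psi(r_Q)^{-1}\int f^2\,d\mu$ summed over the boundary layer is comparable to a capacity of $\partial\Omega$ and does not tend to zero.

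The point you are missing is that the approximants need not vanish near $\widetilde{\Omega}\setminus\Omega$; they only need to be \emph{continuous up to} $\widetilde{\Omega}$. The paper achieves this with a discrete mollification: for a maximal $2^{-n}$-net $Z_n$ one builds a partition of unity $\{\varphi_{v,n}\}_{v \in Z_n} \subseteq \mathcal{F}(\Omega)\cap C_c(\widetilde{\Omega})$ from the cutoffs of Lemma \ref{lemma:cutoffs} (which extend continuously to $\widetilde{\Omega}$ by H\"older continuity) and sets $f_n := \sum_{v} f_{D(v,2^{-n})}\,\varphi_{v,n}$. These are continuous on all of $\widetilde{\Omega}$, converge to $f$ in $L^2$, and are uniformly bounded in energy by the Poincar\'e inequality of Lemma \ref{lemma:PIRefl} together with Corollary \ref{cor:Ucap} and the doubling property; the Mazur's lemma argument of Lemma \ref{lemma:FunAnal} then upgrades boundedness to strong $\mathcal{E}_1^\Omega$-convergence of convex combinations. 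Only after this continuity step does one multiply by the large-scale cutoffs of Corollary \ref{cor:CS} to gain compact support --- which also sidesteps the quasicontinuity issue you would face in applying Corollary \ref{cor:CS} (stated only for $f \in \mathcal{F}(\Omega)\cap C(\widetilde{\Omega})$) to a general bounded $f$ in your second reduction.
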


\begin{proof}
    By Lemma \ref{lemma:ReflDiff}, we only need to verify the regularity. First, the density $\mathcal{F}(\Omega) \cap  C_c(\widetilde{\Omega}) \subseteq C_c(\widetilde{\Omega})$ in the uniform norm follows from standard arguments using Lemma \ref{lemma:cutoffs}. For instance, we could use Stone--Weierstrass theorem.

    Next, we verify the density $\mathcal{F}(\Omega) \cap C_c(\widetilde{\Omega}) \subseteq \mathcal{F}(\Omega)$ in the Hlbert space $(\mathcal{F}(\Omega),\mathcal{E}^\Omega_1)$.
    The method is a fairly standard partition of unity argument. The details are essentially identical to those in \cite[Theorem 3.12]{shimizu2025characterizations}.
    
    Let $f \in \mathcal{F}$.
    For each $n \in \N$ we fix a maximal $2^{-n}$ separated subsets $Z_n \subseteq X$, meaning
    \[
    d(v,w) \geq 2^{-n} \text{ for every pair of distinct } v,w\in Z_n
    \]
    and
    \[
       X = \bigcup_{v \in Z_n} B(v,2^{-n}).
    \]
    Note that $Z_n$ is at most countably infinite by the metric doubling property.
    
    Now, since the measure $\mu$ on $(\widetilde{\Omega},\rho)$ is doubling according to Lemma \ref{lemma:IUD-doubling}, it follows from Lemma \ref{lemma:Leibniz} and Corollary \ref{cor:Ucap} that we can construct a partition of unity using the standard construction; see \cite[Proof of Theorem 6.10]{EvansGariepy} for analogous techniques in the Euclidean spaces and also \cite[Section 2.3]{MuruganChain20} or
    \cite[Lemma 3.10]{shimizu2025characterizations} for similar arguments in more abstract settings.
    Indeed, there is a constant $P > 0$ depending only on the doubling constant of $\mu$, the constants in Definition \ref{def:IUD}, and the constants in Lemma \ref{lemma:PIRefl} and Corollary \ref{cor:Ucap}, such that for each $n \in \N$ there is a family of functions $\{ \varphi_{v,n} \}_{v \in Z_n} \subseteq \mathcal{F}(\Omega) \cap C_c(\widetilde{\Omega})$ satisfying the following conditions.
    \begin{enumerate}
        \item $0 \leq \varphi_{v,n} \leq 1$ and $\varphi|_{X \setminus D(v,2^{2-n})} = 0$.
        \item $\sum_{v \in V_n} \varphi_{v,n} = \mathds{1}_X$.
        \item $\mathcal{E}^{\Omega}(\varphi_{v,n}) \leq P \mu(D(v,2^{-n}))/\Psi(2^{-n})$.
    \end{enumerate}
    We note that ensuring (3) requires Lemma \ref{lemma:Leibniz}.
    Now, consider the functions
    \begin{equation*}
         f_n := \sum_{v \in Z_n} f_{D(v,2^{-n})} \cdot \varphi_{v,n}.
    \end{equation*}
    We first collect some of their properties.
    For each bounded open set $U \subseteq \widetilde{\Omega}$, $f_n|_U$ is finite linear combination of functions in $\{ \varphi_{v,n} \}_{v \in Z_n}$.
    Thus, $f \in \mathcal{F}_{\loc}(\Omega)$.
    Moreover, it follows from the metric doubling property of $\widetilde{\Omega}$ that each open ball $D(v,2^{-n})$ intersects at most $N = N(D)$ members of $\{ D(v,2^{-n}) \}_{v \in Z_n}$ where $D$ is the doubling constant of $\mu$. We then compute
    \begin{align*}
        & \quad \int_X f_n^2 \, d\mu = \int_X \left( \sum_{v \in Z_n} f_{D(v,2^{-n})} \cdot \varphi_{v,n} \right)^2 \, d\mu  \\
        \leq & \quad C(D) \sum_{v \in Z_n} \int_{D(v,2^{2-n})} (f_{D(v,2^{-n})})^2 \varphi_{v,n}^2 \, d\mu && (\text{Cauchy--Schwarz})\\
        \leq & \quad C(D) \sum_{v \in Z_n} \mu(D(v,2^{2-n})) (f_{D(v,2^{-n})})^2 \, d\mu && (0 \leq \varphi_{v,n} \leq 1)\\
        \leq & \quad C(D) \sum_{v \in Z_n} \frac{\mu(D(v,2^{2-n}))}{\mu(D(v,2^{-n}))}\int_{D(v,2^{2-n})} f^2 \, d\mu && (\text{Jensen's ineq.})\\
        \leq & \quad C'(D) \int_{X} f^2 \, d\mu. && (\text{Lemma \ref{lemma:IUD-doubling}})
    \end{align*}
    We use the argument in \cite[Proof of Theorem 3.12]{shimizu2025characterizations} to estimate the energy of $f_n$.
    Let $v \in Z_n$ and denote $I_n(v) := \{ w \in Z_n : D(v,2^{-n}) \cap D(w,2^{-n})  \neq \emptyset\}$.
    By the metric doubling property, we have the bound on the cardinality $\abs{I_n(v)} \leq A(D)$ depending only on the doubling constant of $\mu$.
    \begin{align*}
        & \quad \Gamma^\Omega\Span{f_n}(D(v,2^{-n}))\\
        = & \quad \Gamma^\Omega\Span{f_n - f_{D(v,2^{-n})} \mathds{1}_{X} }(D(v,2^{2-n})) && (\text{Strong locality}) \\
        = & \quad \Gamma^\Omega\left\langle \sum_{w \in I_n(v)} (f_{D(v,2^{-n})} - f_{D(w,2^{-n})}) \varphi_{v,n} \right\rangle (D(v,2^{2-n})) && (\text{Condition (2)}) \\
        \leq & \quad \sum_{ w \in I_n(v) }  (f_{D(v,2^{-n})} - f_{D(w,2^{-n})})^2 \sum_{w \in I_n(v)} \mathcal{E}^\Omega(\varphi_{w,n}) && (\text{Cauchy--Schwarz})\\
        \leq & \quad P \cdot A(D) \sum_{ w \in I_n(v) } (f_{D(v,2^{-n})} - f_{D(w,2^{-n})})^2 \frac{\mu(D(w,2^{-n}))}{\Psi(2^{-n})} && (\text{Condition (3)}) \\
        \leq & \quad K \sum_{ w \in I_n(v) } \frac{\Psi(2^{2-n})}{\mu(D(v,2^{-n}))}\Gamma^\Omega\Span{f}(D(w,\sigma 2^{-n})) \frac{\mu(D(w,2^{-n}))}{\Psi(2^{-n})} && (\text{Lemma \ref{lemma:PIRefl}})\\
        \leq & \quad K' \Gamma^\Omega\Span{f}(D(v,\sigma 2^{4-n})). && (\text{Lemma \ref{lemma:IUD-doubling}})
    \end{align*}
    By summing the previous inequality over all $v \in Z_n$, it follows from the metric doubling property that there is $K'' \geq 1$ independent of $n$,
    \[
        \mathcal{E}^\Omega(f_n) \leq K''\mathcal{E}^\Omega(f).
    \]
    
    Thus, we have verified that $\{f_n\}_{n = 1}^\infty \subseteq \mathcal{F}(\Omega) \cap C(X)$ is a bounded sequence in $(\mathcal{F}(\Omega),\mathcal{E}^\Omega_1)$.
    Moreover, $f_n \to f$ in $L^2(\widetilde{\Omega},\mu)$. To see this, note that $f_n \to f$ in $L^\infty(X,\mu)$ when $f \in C_c(\widetilde{\Omega})$ by the uniform continuity.
    The general case $f \in L^2(\widetilde{\Omega},\mu)$ follows from the density $C_c(\widetilde{\Omega}) \subseteq L^2(\widetilde{\Omega},\mu)$ and the fact that the linear operators $f \mapsto f_n$ are uniformly bounded in $L^2(\widetilde{\Omega},\mu)$.
    Hence, by the Mazur's lemma argument in Lemma \ref{lemma:FunAnal}, there are convex combinations
    \[
        \sum_{l = k}^{N_k} \lambda_{k,l} f_k \in \mathcal{F}(\Omega) \cap C(\widetilde{\Omega})
    \]
    that converge to $f$ as $k \to \infty$ in the Hilbert space $(\mathcal{F}(\Omega),\mathcal{E}_1^\Omega)$.
    This proves that $f$ can be approximated by continuous functions $\mathcal{F}(\Omega) \cap C(X)$.

    The remaining task is to show that $f \in \mathcal{F}(\Omega)$ can be approximated by compactly supported continuous functions. To this end, by the previous part of the proof, we may assume $f \in \mathcal{F}(\Omega) \cap C(X)$.
    We also fix $x \in \widetilde{\Omega}$ and for each $n \in \N$ we take a cutoff function $\xi_n \in \mathcal{F}(\Omega)$ for $D(x,2^n) \subseteq D(x,2^{n+1})$ provided by Corollary \ref{cor:CS}.
    Then, it follows from the reasoning in Remark \ref{rem:CS} that $f \cdot \xi_n \in \mathcal{F}(\Omega) \cap C_c(\widetilde{\Omega})$ and
    \[
        \mathcal{E}^\Omega(f \cdot \xi_n) \leq A \left(\int_{B(y,2^{n+1})} \, d\Gamma^\Omega\Span{f} + \frac{1}{\Psi(r)} \int_{B(y,2^{n+1})}f^2 \, d\mu\right),
    \]
    where $A \geq 1$ is independent of $n$.
    Hence, $f \cdot \xi_n$ is a bounded sequence in $(\mathcal{F}(\Omega),\mathcal{E}_1^\Omega)$ satisfying $f \cdot \xi_n \to f$ in $L^2(\widetilde{\Omega},\mu)$. By the identical Mazur's lemma argument, we conclude that $f$ can be approximated by functions in $\mathcal{F}(\Omega) \cap C_c(\widetilde{\Omega})$, which completes the proof.
\end{proof}

\subsection{Proof of the main theorem}

Now that the regularity of $(\mathcal{E}^\Omega,\mathcal{F}(\Omega))$ is verified, we may consider the associated energy measures. For now, we denote them $\widetilde{\Gamma}^\Omega\Span{\cdot}$ to distinguish them from $\Gamma^\Omega\Span{\cdot}$. Nevertheless, it follows fairly directly that these measures coincide; see \cite[Proof of Proposition 5.7]{kajino2023heat} for a similar argument.

\begin{corollary}\label{cor:EMcoincide}
    For all $f \in \mathcal{F}(\Omega)$ it holds that $\widetilde{\Gamma}^\Omega\Span{f} = \Gamma^\Omega\Span{f}$.
\end{corollary}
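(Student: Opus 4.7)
The plan is to verify that $\Gamma^\Omega\Span{\cdot}$ satisfies the defining integral identity of the intrinsic energy measure $\widetilde{\Gamma}^\Omega\Span{\cdot}$ of the strongly local regular Dirichlet form $(\mathcal{E}^\Omega,\mathcal{F}(\Omega))$ established in Proposition \ref{prop:SLRDF}. By the uniqueness of the Radon measure in that definition, together with the truncation convention for unbounded arguments, it suffices to show that for every $f \in \mathcal{F}(\Omega) \cap L^\infty(\widetilde{\Omega},\mu)$ and every $\varphi \in \mathcal{F}(\Omega) \cap C_c(\widetilde{\Omega})$,
\[
    \int_{\widetilde{\Omega}} \varphi \, d\Gamma^\Omega\Span{f} = \mathcal{E}^\Omega(f, f \cdot \varphi) - \tfrac{1}{2}\mathcal{E}^\Omega(f^2, \varphi).
\]

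First I would treat the case where $f$ is additionally continuous, namely $f \in \mathcal{F}(\Omega) \cap L^\infty(\widetilde{\Omega},\mu) \cap C(\widetilde{\Omega})$. Then the Leibniz rule from Lemma \ref{lemma:Leibniz}(1), applied with $f_1 = \varphi$, $f_2 = f$ and $f_3 = f$, yields
\[
    d\Gamma^\Omega\Span{\varphi \cdot f, f} = \varphi \, d\Gamma^\Omega\Span{f} + f \, d\Gamma^\Omega\Span{\varphi, f},
\]
while the chain rule in Lemma \ref{lemma:Leibniz}(2) with $\psi(t) = t^2$, $f_1 = f$ and $f_2 = \varphi$ gives
\[
    d\Gamma^\Omega\Span{f^2, \varphi} = 2f \, d\Gamma^\Omega\Span{f, \varphi}.
\]
Integrating the appropriate linear combination of these two displays over $\widetilde{\Omega}$ and using $\mathcal{E}^\Omega(g,h) = \int_\Omega d\Gamma^\Omega\Span{g,h}$ from Definition \ref{def:LocDir} produces the desired identity in the continuous case.

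Next I would remove the continuity assumption. By the regularity established in Proposition \ref{prop:SLRDF}, I can pick a sequence $\{f_n\}_{n=1}^\infty \subseteq \mathcal{F}(\Omega) \cap C_c(\widetilde{\Omega})$ converging to $f$ in $(\mathcal{F}(\Omega), \mathcal{E}_1^\Omega)$, and then replace $f_n$ by its truncation at height $\norm{f}_\infty$, which is a normal contraction that preserves continuity, compact support, $\mathcal{F}(\Omega)$-membership and $\mathcal{E}_1^\Omega$-convergence. The right-hand side is then continuous under this approximation via the product estimates transcribed to $\widetilde{\Omega}$ through Corollary \ref{cor:CS} (giving $f_n \cdot \varphi \to f \cdot \varphi$ and $f_n^2 \to f^2$ in $(\mathcal{F}(\Omega), \mathcal{E}_1^\Omega)$), while the left-hand side converges by the Cauchy--Schwarz inequality for signed energy measures,
\[
    \left| \int \varphi \, d\Gamma^\Omega\Span{f_n} - \int \varphi \, d\Gamma^\Omega\Span{f} \right| \leq \norm{\varphi}_\infty \sqrt{\mathcal{E}^\Omega(f_n - f)} \left( \sqrt{\mathcal{E}^\Omega(f_n)} + \sqrt{\mathcal{E}^\Omega(f)} \right),
\]
which is inherited from $\Gamma$ via the definition of $\Gamma^\Omega$.

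The main obstacle is precisely this passage from continuous to general bounded $f$: the Leibniz and chain rules of Lemma \ref{lemma:Leibniz} are only stated for continuous representatives, owing to the technical issues with quasicontinuity noted in Remark \ref{rem:QS}, and cannot be invoked directly at the level of $\mathcal{F}(\Omega) \cap L^\infty$. The regularity of $(\mathcal{E}^\Omega,\mathcal{F}(\Omega))$, which was the technical core of the previous subsection, is exactly what makes the continuous approximation available and closes the argument.
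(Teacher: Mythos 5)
Your overall architecture coincides with the paper's: establish the defining identity $\int_{\widetilde{\Omega}}\varphi\,d\Gamma^\Omega\Span{f}=\mathcal{E}^\Omega(f,f\varphi)-\tfrac12\mathcal{E}^\Omega(f^2,\varphi)$ first for continuous $f$ via Lemma \ref{lemma:Leibniz} and the uniqueness in Definition \ref{def:Dir}, then pass to bounded $f$ by approximation in $(\mathcal{F}(\Omega),\mathcal{E}_1^\Omega)$ with uniformly bounded sup norms, and finally truncate. The continuous case, the convergence of the left-hand side via the Cauchy--Schwarz inequality for signed energy measures, and the truncation reduction are all sound (the paper spells the last step out more carefully via Lemma \ref{lemma:FunAnal} and total variation convergence, but your compression is defensible since the ambient energy measures $\Gamma\Span{\cdot}$ are themselves defined by the same truncation limit).

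The gap is in the justification of the convergence of the right-hand side, which is the actual crux of the proof. Corollary \ref{cor:CS} only controls $\int f^2\,d\Gamma^\Omega\Span{\xi}$ for the special cutoff functions $\xi$ of metric balls; it gives no information about $\Gamma^\Omega\Span{f_n\varphi}$ or $\Gamma^\Omega\Span{f_n^2}$ for a general $\varphi\in\mathcal{F}(\Omega)\cap C_c(\widetilde{\Omega})$, so it cannot deliver $f_n\varphi\to f\varphi$ or $f_n^2\to f^2$ in $(\mathcal{F}(\Omega),\mathcal{E}_1^\Omega)$. Nor does the standard product estimate $\mathcal{E}^\Omega(uv)^{1/2}\le\norm{u}_{L^\infty}\mathcal{E}^\Omega(v)^{1/2}+\norm{v}_{L^\infty}\mathcal{E}^\Omega(u)^{1/2}$ give these strong convergences: applied to $f_n^2-f^2=(f_n-f)(f_n+f)$ it produces the term $\norm{f_n-f}_{L^\infty}\mathcal{E}^\Omega(f_n+f)^{1/2}$, and $\norm{f_n-f}_{L^\infty}$ need not vanish under $\mathcal{E}_1^\Omega$-convergence. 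Two repairs are available. The paper's route: show $\widetilde{\Gamma}^\Omega\Span{f_n,\varphi}\to\widetilde{\Gamma}^\Omega\Span{f,\varphi}$ in total variation (Cauchy--Schwarz again), pass to a subsequence with $f_n\to f$ quasi-everywhere, use that energy measures charge no sets of zero capacity, and apply dominated convergence. Alternatively, and more cheaply: the uniform bounds $\sup_n\norm{f_n}_{L^\infty}<\infty$ and $\sup_n\mathcal{E}^\Omega(f_n)<\infty$ already make $\{f_n\varphi\}_n$ and $\{f_n^2\}_n$ bounded in $(\mathcal{F}(\Omega),\mathcal{E}_1^\Omega)$ and convergent in $L^2$, hence weakly convergent to $f\varphi$ and $f^2$ by the Mazur argument of Lemma \ref{lemma:FunAnal}; weak convergence in one slot, paired with strong convergence of $f_n$ in the other slot (respectively with the fixed $\varphi$), already yields $\mathcal{E}^\Omega(f_n,f_n\varphi)\to\mathcal{E}^\Omega(f,f\varphi)$ and $\mathcal{E}^\Omega(f_n^2,\varphi)\to\mathcal{E}^\Omega(f^2,\varphi)$. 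Either argument closes the step; the appeal to Corollary \ref{cor:CS} as written does not.
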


\begin{proof}
    During the proof, we let $\varphi \in \mathcal{F}(\Omega) \cap C_c(\widetilde{\Omega})$ to be arbitrary.
    
    First, let $f \in \mathcal{F}(\Omega) \cap C_c(\widetilde{\Omega})$. By Lemma \ref{lemma:Leibniz},
    \begin{align*}
        \mathcal{E}^\Omega(f,f \varphi) - \frac{1}{2} \mathcal{E}^\Omega(f^2,\varphi) & = \int_{\Omega} \varphi\, d\Gamma^\Omega\Span{f} + \int_{\Omega} f\, d\Gamma^\Omega\Span{f,\varphi} - \int_{\Omega} f\, d\Gamma^\Omega\Span{f,\varphi}\\
        & = \int_{\Omega} \varphi\, d\Gamma^\Omega\Span{f}.
    \end{align*}
    Since $\widetilde{\Gamma}^\Omega\Span{f}$ is uniquely determined by the formula in the previous display, we must have $\Gamma^\Omega\Span{f} =  \widetilde{\Gamma}^\Omega\Span{f}$.
    
    Then, we consider the case $f \in \mathcal{F}(\Omega) \cap L^\infty(\widetilde{\Omega},\mu)$, and take a sequence $\{ f_n \}_{n = 1}^\infty \subseteq \mathcal{F}(\Omega) \cap C_c(\widetilde{\Omega})$ convering to $f$ in the Hilbert space $(\mathcal{F}(\Omega),\mathcal{E}_1^\Omega)$.
    Since $\norm{f}_{L^\infty(X,\mu)} < \infty$, we may assume that $\sup_{n \in \N} \norm{f_n}_{L^\infty(X)} < \infty$. 
    Now, it follows from the Leibniz rule \cite[Theorem 3.2.5]{FOT},
    \begin{align*}
        \quad \frac{1}{2}& \abs{\mathcal{E}^\Omega(f_n^2,\varphi) - \mathcal{E}^\Omega(f^2,\varphi)} =  \left|\int_{\widetilde{\Omega}} f_n \, d\widetilde{\Gamma}^\Omega\Span{f_n,\varphi} - \int_{\widetilde{\Omega}} f \, d\widetilde{\Gamma}^\Omega\Span{f,\varphi} \right|\\
        \leq \quad  & \left|\int_{\widetilde{\Omega}} f_n \, d\widetilde{\Gamma}^\Omega\Span{f_n,\varphi} - \int_{\widetilde{\Omega}} f_n \, d\widetilde{\Gamma}^\Omega\Span{f,\varphi} \right| + \left|\int_{\widetilde{\Omega}} f_n \, d\widetilde{\Gamma}^\Omega\Span{f,\varphi} - \int_{\widetilde{\Omega}} f \, d\widetilde{\Gamma}^\Omega\Span{f,\varphi} \right|.
    \end{align*}
    We show that there is a subsequence such that the upper bound in the previous display vanishes as $n \to \infty$.
    
    By taking a subsequence if necessary, it follows from \cite[Theorem 2.1.4]{FOT} that $f_n$ converges point-wise to $f$ quasi-everywhere. Also note that energy measures do not charge sets of zero capacity by \cite[Lemma 3.2.4]{FOT}. Thus, by using the facts that $\abs{f_n},\abs{f} \leq M < \infty$ holds quasi-everywhere for some fixed $M$, and that $\widetilde{\Gamma}^\Omega\Span{f,\varphi}$ is a signed Radon measure with finite total variation, it follows from the dominated convergence theorem that the latter term in the previous display vanishes as $n \to \infty$.
    
    We estimate the first term as follows.
    Recall that the operator norm of a signed Radon measure $\nu$ as an element of the dual space of $(C_c(\widetilde{\Omega}),\norm{\cdot}_{L^\infty})$ is equal to its total variation $\abs{\nu}_{\text{TV}}$; see \cite[Theorem 6.19]{Rudin2}.
    Furthermore, it follows from the convergence $f_n \to f$ in $(\mathcal{F}(\Omega),\mathcal{E}_1^\Omega)$ that the sequence of Radon measures $\widetilde{\Gamma}^\Omega\Span{f_n,\varphi}$ converges to $\widetilde{\Gamma}^\Omega\Span{f,\varphi}$ in total variation. To see this, fix a Borel set $A \subseteq \widetilde{\Omega}$. Then, by the definition of the two variable measures,
    \begin{align*}
        & \abs{\widetilde{\Gamma}^\Omega\Span{f,\varphi}(A) - \widetilde{\Gamma}^\Omega\Span{f,\varphi}(A)}\\
        \leq \quad & \abs{\widetilde{\Gamma}^\Omega\Span{f+ \varphi}(A) - \widetilde{\Gamma}^\Omega\Span{f_n + \varphi}(A)} + \abs{\widetilde{\Gamma}^\Omega\Span{f- \varphi}(A) - \widetilde{\Gamma}^\Omega\Span{f_n - \varphi}(A)}.
    \end{align*}
    By using the reverse triangle inequality, which follows from the Cauchy--Schwarz inequality \cite[Lemma 5.6.1]{FOT},
    \begin{align*}
        & \abs{\widetilde{\Gamma}^\Omega\Span{f \pm \varphi}(A) - \widetilde{\Gamma}^\Omega\Span{f_n \pm \varphi}(A)}\\
        \leq & \quad\left( \sqrt{\mathcal{E}^\Omega(f\pm\varphi)} + \sqrt{\mathcal{E}^\Omega(f_n\pm\varphi)} \right) \sqrt{\mathcal{E}^\Omega(f - f_n)}\\
        \leq & \quad 2\left( \sqrt{\mathcal{E}^\Omega(f)} + \sqrt{\mathcal{E}^\Omega(f_n)} + \sqrt{\mathcal{E}(\varphi)} \right) \sqrt{\mathcal{E}^\Omega(f - f_n)}
    \end{align*}
    Since the sequence $\mathcal{E}^\Omega(f_n)$ is bounded, we see from the previous two displays that $\widetilde{\Gamma}^\Omega\Span{f_n,\varphi}$ converges to $\widetilde{\Gamma}^\Omega\Span{f,\varphi}$ in total variation.
    We conclude that
    \[
        \left|\int_{\widetilde{\Omega}} f_n \, d\widetilde{\Gamma}^\Omega\Span{f_n,\varphi} - \int_{\widetilde{\Omega}} f_n \, d\widetilde{\Gamma}^\Omega\Span{f,\varphi} \right| \leq \sup_{n \in \N} \norm{f_n}_{L^\infty} \abs{\widetilde{\Gamma}^\Omega\Span{f_n,\varphi} - \widetilde{\Gamma}^\Omega\Span{f,\varphi}}_{\text{TV}}
    \]
    vanishes as $n \to \infty$, which gives $\mathcal{E}^\Omega(f_n^2,\varphi) \to \mathcal{E}^\Omega(f^2,\varphi)$ as $n \to \infty$.

    By a similar argument, $\mathcal{E}^\Omega(f_n,f_n\varphi) \to \mathcal{E}^\Omega(f,f\varphi)$ as $n \to \infty$. Thus, we have
    \begin{align*}
        \int_{\Omega}\varphi \, d\Gamma^{\Omega}\Span{f} & = \lim_{n \to \infty} \int_{\Omega} \varphi \, d\Gamma^{\Omega}\Span{f_n}\\
        & = \lim_{n \to \infty} \mathcal{E}^\Omega(f_n,f_n \varphi) - \frac{1}{2} \mathcal{E}^\Omega(f_n^2,\varphi)\\
        & = \mathcal{E}^\Omega(f,f \varphi) - \frac{1}{2} \mathcal{E}^\Omega(f^2,\varphi).
    \end{align*}
    The first equality follows from the fact that $\Gamma^\Omega\Span{f_n}$ converges to $\Gamma^\Omega\Span{f}$ in total variation by a similar argument as above.
    This completes the proof in the case $f \in \mathcal{F}(\Omega) \cap L^\infty(\widetilde{\Omega},\mu)$.

    Lastly, we consider the general case $f \in \mathcal{F}(\Omega)$. Note that we have defined the energy measure $\widetilde{\Gamma}^\Omega$ in Definition \ref{def:Dir} according to
    \begin{equation}\label{eq:Final}
        \widetilde{\Gamma}^\Omega\Span{f}(A) = \lim_{k \to \infty} \widetilde{\Gamma}^\Omega\Span{(f \land k) \lor -k}(A) = \lim_{k \to \infty}\Gamma^\Omega\Span{(f \land k) \lor -k}(A)
    \end{equation}
    for all Borel sets $A \subseteq \widetilde{\Omega}$.
    The last equality in \eqref{eq:Final} follows from the previous part of the proof. Furthermore, we have the following properties.
    \begin{enumerate}
        \vspace{2pt} 
        \item $(f \land k) \lor -k \to f$ in $L^2(\widetilde{\Omega},\mu)$ by the dominated convergence theorem.
        \vspace{2pt}
        \item $\mathcal{E}^\Omega((f \land k) \lor -k) \leq \mathcal{E}^\Omega(f)$ by the Markov property.
    \end{enumerate}
    Thus, according to Lemma \ref{lemma:FunAnal},
    \[
    (f \land k) \lor -k \to f \text{ in } (\mathcal{F}(\Omega),\mathcal{E}_1^\Omega).
    \]
    By a similar argument as earlier in the proof,
    \[
        \Gamma^\Omega\Span{ (f \land k) \lor -k} \to \Gamma^\Omega\Span{f} \text{ in total variation.}
    \]
    This along with \eqref{eq:Final} proves $\widetilde{\Gamma}^\Omega\Span{f} = \Gamma^\Omega\Span{f}$ in the desired general case.
\end{proof}

\begin{remark}
    It follows from Corollary \ref{cor:EMcoincide} and the definition of the measures $\Gamma^\Omega\Span{\cdot}$ that the energy measures of $(\mathcal{E}^\Omega,\mathcal{F}(\Omega))$ do not charge the set $\widetilde{\Omega}\setminus\Omega$.
\end{remark}

We have now gathered everything we need to prove the main theorem, Theorem \ref{thm:Main}, and more generally, Theorem \ref{thm:MainMain}.

\begin{proof}[Proof of Theorem \ref{thm:MainMain}]
    First, $(\mathcal{E}^\Omega,\mathcal{F}(\Omega))$ is a strongly local regular Dirichlet form on $L^2(\widetilde{\Omega},\mu)$ by Proposition \ref{prop:SLRDF}. By Corollary \ref{cor:EMcoincide}, the associated energy measures are the measures $\Gamma^\Omega\Span{\cdot}$ given in Definition \ref{def:LocDir}. Hence, $(\mathcal{E}^\Omega,\mathcal{F}(\Omega))$ satisfies the Poincar\'e inequality \ref{eq:PI} and the cutoff enery condition \ref{eq:CE} by Lemma \ref{lemma:PIRefl} and Lemma \ref{lemma:cutoffs}, respectively. Hence, \ref{eq:HKMain} follows from Lemma \ref{lemma:charHK}.
\end{proof}

\bibliographystyle{acm}
\bibliography{DF}

\end{document}